\title{\Large S\MakeLowercase{olutions for the} L\MakeLowercase{andsberg unicorn  problem in} F\MakeLowercase{insler geometry}}
\author[Elgendi]{\bf S. G. ~E\MakeLowercase{lgendi}}
\address{S.G.~Elgendi, Department of Mathematics, Faculty of Science, Benha
  University, Egypt, \phantom{mmmmm}
 Institute of Mathematics, University of Debrecen,
  Debrecen, Hungary} \email{salah.ali@fsci.bu.edu.eg, \ \ salahelgendi@yahoo.com.}
\keywords{Berwald metrics;  Landsberg metrics; $(\alpha,\beta)$-metrics, Finsler packages; Maple program.}
\subjclass[2010]{53C60, 53B40, 58B20, 68U05, 83-04, 83-08.}
\thanks{}
\def\blue#1{\textcolor[rgb]{0.0,0.0,1.0}{#1}}
\newcommand{\T}{{\mathcal T}}
\newcommand{\C}{{\mathcal C}}
\newcommand{\Real}{\mathbb R}
\newcommand{\set}[1]{\left\{#1\right\}}
\newcommand{\To}{\longrightarrow}
\newcommand{\tm}{\T M}
\def\paa{\dot{\partial}}
\def\+{\!+\!}
\def\={\!=\!}
\def\<{\!<\!}
\def\>{\!>\!}
\let\oldmarginpar\marginpar
\renewcommand\marginpar[1]{\oldmarginpar[\raggedleft\footnotesize #1]%
  {\blue{\raggedright \footnotesize \fbox{
      \begin{minipage}{1.0\linewidth}
        #1
      \end{minipage}
}}}}
\numberwithin{equation}{section} 
\numberwithin{figure}{section} 
\theoremstyle{plain}
\newtheorem*{theorem*}{Theorem}
\newtheorem{theorem}{Theorem}[section]
\theoremstyle{definition}
\newtheorem{definition}[theorem]{Definition}
\newtheorem{example}[theorem]{Example}
\newtheorem{remark}[theorem]{Remark}
\newtheorem*{acknowledgement*}{Acknowledgement}
\newcommand\undersym[2]{\raisebox{-7pt}{\tiny$#2$}{\kern-8pt}\mbox{$#1$}}
\newcommand\undersymm[2]{\raisebox{-7pt}{\tiny$#2$}{\kern-15pt}\mbox{$#1$}}
\begin{document}

\maketitle

\bigskip

\begin{center}
\textit{Dedicated to the memory of Professor Lajos Tam\'{a}ssy
}\end{center}

\bigskip

\begin{abstract}
 It is still a long-standing open problem in Finsler geometry, is there any regular Landsberg metric which is not Berwaldian. However, there are non-regular Landsberg metrics which are not Berwladian. The known examples are established by G. S. Asanov and Z. Shen. In this paper, we use the Maple program to study some explicit examples of non-Berwaldian Landsberg metrics. In fact, such kinds of examples are very tedious and complicated to investigate. Nonetheless, we use the maple program and Finsler packages to simplify the calculations in an elegant way. Depending on these examples, we manage to figure out some geometric properties of the geodesic spray of a non-Berwaldain Landsberg metric. Deforming this spray in a very specific way, using the metrizability tools of the deformed spray, we get new (very simple) non-Berwaldian Landsberg metrics. Moreover, the powerful  of this procedure is investigating a very simple and useful formula for the general class obtained by Z. Shen.
 \end{abstract}

\section{Introduction}

The examples and applications of    Finsler geometry, principally,  in mathematics and physics,  are very tedious and intricate  to accomplish.  For this reason, one of the benefits of using a computer is to manipulate the complicated calculations with saving time and efforts.   Moreover, this  enables  to study many  examples  in various applications  (cf., for example, \cite{r101}, \cite{Rutz2},  \cite{Shen-book}, \cite{r93},\cite{Portugal1}). The Finsler package \cite{Rutz3} included in \cite{hbfinsler1} and the new Finsler package \cite{NF_Package} are   good  illustrations of using computer in the applications of Finsler geometry.

\medskip

A Finsler manifold $(M, F)$ is said be  \textit{Berwald } if the coefficients of  Berwald connection depend only on the position arguments and this is equivalent to that the geodesic spray of $F$ is quadratic. A Finsler manifold $(M, F)$  is said be \textit{Landsberg } if the horizontal covariant derivative of the metric tensor of $F$ with respect to Berwald connection vanishes. Lots of  characterizations for both Berwald and Landsberg metrics can be found in the literature.

It is known that every Berwald space is  Landsberg. Whether there are Landsberg spaces which are not of Berwald type is a long-standing question in Finsler geometry, which is still open.  Despite the extraordinary  effort by many Finsler geometers, it is not known an example of a regular non-Berwaldian Landsberg space.

\medskip

 In \cite{Asanov}, G. S. Asanov  obtained  examples, arising from Finslerian General Relativity, of non-Berwaldian Landsberg spaces, of dimension at least $3$.  In Asanov's examples the Finsler functions are not defined for all values of the fiber coordinates $y^i$ (non-regular). Whether or not there are regular non-Berwaldian Landsberg spaces remains an open question.   In \cite{Shen_example}, Z. Shen  studied the class of $(\alpha,\beta)$ metrics of Landsberg type, of which Asanov's examples are particular cases; he found \cite{Shen_etal,Shen_example} that  there are non-regular non-Berwaldian Landsberg spaces with $(\alpha,\beta)$ metrics, there are no regular ones. Bao \cite{Bao} tried to  construct non-Berwaldian Landsberg spaces by successive approximation,  but this method  so far could not  solve the problem. The elusiveness of regular non-Berwaldian Landsberg spaces leads Bao to describe them as the unicorns of Finsler geometry. Lots of papers studying this problem can be found in the literature, for example, we refer to \cite{Crampin} and the references therein.

\medskip

In this paper, using computer and Maple program, we investigate various fascinating  and very simple examples of Landsberg non-Berwaldian metrics. Precisely, we study some explicit examples of non-regular Landsberg metrics which are not Berwaldian. In fact, the calculations of such examples are not easy to do. However, we manage to use the Maple program to simplify the calculations to very simple  formulae. Depending on the simplified formulae we point out some remarks which are useful not only to  show some geometric meaning of a spray of Landsberg metric, but also  to find new examples.  The most interesting and useful remark  that we point out  is the following.

 According to Shen \cite{Shen_example}, the class of non Berwaldian Landsberg metrics  on a manifold $M$ has extreme directions.   Without loss of generality, we fix the extreme (singular) directions of the Landsberg metric $(M,F)$, which is investigated by Z. Shen, as $(\pm 1 , 0,...,0)$, then the coefficients of the geodesic spray are given by
$$G^1=f_{ij}(x^1)y^iy^j, \quad G^\mu=Py^\mu,$$
where $f_{ij}$ are some smooth functions on $M$. Hence, the Berwald tensor is given by
$$G^1_{ijk}=0,\quad G^h_{1jk}=0,\quad G^\mu_{\lambda \nu \gamma}=P_{\lambda \nu \gamma} \ y^\mu +P_{\lambda \nu } \ \delta^\mu_\gamma+P_{\nu \gamma  } \ \delta^\mu_\lambda+P_{\gamma\lambda  } \ \delta^\mu_\nu, $$
where $P$ is  defined and  smooth only on an open subset of $T M$, $P_i:=\dot{\partial}_i P, P_{ij}:=\dot{\partial}_i\dot{\partial}_j P, P_{ijk}:=\dot{\partial}_i\dot{\partial}_j \dot{\partial}_k P$ and moreover, $P_{1j}=0$. Throughout, the Greek letters $\lambda, \mu, \nu, \gamma$ run over  $2,...,n$ and $\dot{\partial}_i$ stands for the partial derivative with respect to directions $y^i$.
Therefore, the Landsberg tensor takes the form

$$
L_{\lambda \nu \gamma}=-\frac{1}{2}F(P_{\lambda \nu \gamma} \ \ell_\mu y^\mu +P_{\lambda \nu } \ \ell_\gamma+P_{\nu \gamma  } \ \ell_\lambda+P_{\gamma\lambda  } \ \ell_\nu), \quad \ell_i:=\paa_iF.$$
In this case, it is clear  that the Landsberg tensor is given in terms of the function $P$ and the Finsler function $F$  as well. This specific form of the spray makes the condition of  Finsler metric for  being Landsberg much weaker; it is possible to find a function $P$ satisfies $L_{\lambda \nu \gamma}=0$ instead of the existence of $n$ different  functions  $G^i$ such that $L_{ijk}=-\frac{1}{2}F\ell_h G^h_{ijk}=0$.

\medskip

The main aim of this paper is to provide  new solutions for the unicorn problem, for this reason,  we deform the spray of a non-Berwaldain Landsberg metric in  a very specific way. Then, studying the metrizability of the new sprays we get the solutions.
For $n\geq 3$, arbitrary constants $a,p,q$, and  $\beta$ and $\alpha$ are given in Subsection 4.1, we investigate the  following classes on an $n$-dimensional manifold $M$

$${F}=\left(a \beta+\sqrt{\alpha^2-\beta^2}\right)\,e^{\frac{a \beta}{a\beta+\sqrt{\alpha^2-\beta^2}}}, \quad a\neq 0,$$

$${F}=\left((a+1)\beta +\sqrt{\alpha^2-\beta^2}\right)^{(1+a)/2}\,\left((a-1) \beta+\sqrt{\alpha^2-\beta^2}\right)^{(1-a)/2}, \quad a\neq 0 ,\pm 1,$$

$${F}=f(x^1)\left(a\beta+\frac{\alpha^2-\beta^2}{a\beta+2\sqrt{\alpha^2-\beta^2}}\right), \quad a\neq 0,$$

$$F=\sqrt{\alpha^2+p\beta\sqrt{\alpha^2-\beta^2}+q\beta^2}\,e^{\frac{p}{\sqrt{p^2-4q-4}}\, \text{arctanh}\left(\frac{p\beta+2\sqrt{\alpha^2-\beta^2}}{\beta\sqrt{p^2-4q-4}}\right)}, \quad p\neq 0,$$
  are classes of Landsberg metrics which are not Berwaldian. For  further discussion about these classes, how they are new or related to the known ones, we pay reader's attention to the concluding remarks at the end of this paper.

\section{Preliminaries}

Let  $M$ be  an n-dimensional smooth manifold.    The tangent space to $M$ at $p$ is denoted by  $T_pM$; $TM:=\undersymm{\bigcup}{p\in M}\,\,T_{p}M$ is the tangent bundle of $M$, $\tau:TM\longrightarrow M$ is the tangent bundle projection. We fix a chart $(\mathcal{U}, (u^1,...,u^n))$ on $M$. It induces a local coordinate system $(x^1,...,x^n, y^1,...,y^n)$ on $TM$, where
$$x^i:=u^i\circ \tau, \quad y^i(v):=v(u^i) \quad (v\in \tau^{-1}(\mathcal{U})).$$
By abuse of notation, we shall denote the coordinate functions $u^i$ also by $x^i$.

 The vector $1$-form $J$ on $TM$ defined,
locally, by $J = \frac{\partial}{\partial y^i} \otimes dx^i$ is called the
natural almost-tangent structure of $T M$. The vertical vector field
$\C=y^i\frac{\partial}{\partial y^i}$ on $TM$ is called the canonical or the
Liouville vector field.

A vector field $S\in \mathfrak{X}(\T M)$ is called a spray if $JS = \C$ and
$[\C, S] = S$. Locally, a spray can be expressed as follows
\begin{equation}
  \label{eq:spray}
  S = y^i \frac{\partial}{\partial x^i} - 2G^i\frac{\partial}{\partial y^i},
\end{equation}
where the \emph{spray coefficients} $G^i=G^i(x,y)$ are $2$-homogeneous
functions in  $y$.

A nonlinear connection is defined by an $n$-dimensional distribution $H : z \in \tm \rightarrow H_z(\tm)\in T_z(\tm)$ that is supplementary to the vertical distribution, which means that for all $z \in \tm$, we have
$T_z(\tm) = H_z(\tm) \oplus V_z(\tm).$

Every spray S induces a canonical nonlinear connection through the corresponding horizontal and vertical projectors,
\begin{equation}
  \label{projectors}
    h=\frac{1}{2}  (Id + [J,S]), \,\,\,\,            v=\frac{1}{2}(Id - [J,S])
\end{equation}
 With respect to the induced nonlinear connection, a spray $S$ is horizontal, which means that $S = hS$. Locally, the two projectors $h$ and $v$ can be expressed as follows
$$h=\frac{\delta}{\delta x^i}\otimes dx^i, \quad\quad v=\frac{\partial}{\partial y^i}\otimes \delta y^i,$$
$$\frac{\delta}{\delta x^i}=\frac{\partial}{\partial x^i}-G^j_i(x,y)\frac{\partial}{\partial y^j},\quad \delta y^i=dy^i+G^j_i(x,y)dx^i, \quad G^j_i(x,y)=\frac{\partial G^j}{\partial y^i}.$$

If $X\in  \mathfrak{X}(M)$, $ i_{X}$ and $\mathcal{L}_X$ denote the interior product by $X$ and the Lie derivative  with respect to $X$, respectively. The differential of $f\in C^\infty(M) $ is $df$. A vector $\ell$-form on $M$ is a skew-symmetric $C^\infty(M)$-linear map $L:(\mathfrak{X}(M))^\ell\longrightarrow \mathfrak{X}(M)$. Every vector $\ell$-form $L$ defines two graded derivations $i_L$ and $d_L$ of the exterior algebra of $M$ such that
 $$i_Lf=0, \quad i_Ldf=df\circ L, \quad d_L:=[i_L,d]=i_L\circ d-(-1)^{\ell-1}di_L,\quad (f\in C^\infty(M)).$$

Throughout, we use the following notations for the partial differentiation with respect to the position arguments $x^i$ and the direction arguments $y^i$
 $${\partial}_i:=\frac{\partial}{\partial x^i}, \quad\dot{\partial}_h:=\frac{\partial}{\partial y^h}.$$

\begin{definition}
A Finsler manifold  of dimension $n$ is a pair $(M,F)$, where $M$ is a  differentiable manifold of dimension $n$ and $F$ is a map  $$F: TM \To \Real ,\vspace{-0.1cm}$$  such that{\em:}
 \begin{description}
    \item[(a)] $F$ is smooth and strictly positive on $\T M$ and $F(x,y)=0$ if and only if $y=0$,
    \item[(b)]$F$ is positively homogenous of degree $1$ in the direction argument $y${\em:}
    $\mathcal{L}_{\mathcal{C}} F=F$,
    \item[(c)] The metric tensor $g_{ij}= \dot{\partial}_i \dot{\partial}_j E$ is non-degenerate on $\T M$, where $E:=\frac{1}{2}F^2$ is the energy function.
 \end{description}
 In this case $(M,F)$ is called regular Finsler space. If $F$ is not smooth or even not defined in some directions, then we call the Finsler Function non-regular.
 \end{definition}

 Since the $2$-form $dd_JE$ is non-degenerate,  the Euler-Lagrange equation
\begin{equation*}
  i_Sdd_JE=-dE
\end{equation*}
uniquely determines a spray $S$ on $TM$.  This spray is called the
\emph{geodesic spray} of the Finsler function.

\begin{definition}
  A spray $S$ on a manifold $M$ is called \emph{Finsler metrizable} if there
  exists a Finsler function $F$ such that the geodesic spray of the Finsler
  manifold $(M,F)$ is $S$.
  \end{definition}

  It is known that a spray $S$ is Finsler metrizable if and only if there exists a non-degenerate  solution $F$    for the system
  \begin{equation}
  \label{metrizable_system}
  d_hF=0, \quad d_\C F=F,
  \end{equation}
  where $h$ is the horizontal projector associated to $S$.

For a Finsler metric $F = F(x, y)$ on  $M$, the coefficients $G^i$ of the geodesic spray $S $   are given  by
\begin{equation}
\label{geodesic_spray}
G^i= \frac{1}{4}g^{ih}( y^r\partial_r\dot{\partial}_hF^2 - \partial_hF^2).
\end{equation}
The  components $G^i_j$ of the non linear connection  and the coefficients $G^i_{jk}$ of Berwald connection  are defined, respectively, by
$$G^i_j=\dot{\partial}_jG^i, \quad G^i_{jk}=\dot{\partial}_kG^i_j.$$
 The Berwald tensor $G^i_{jkh}$ and  Landsberg tensor $L_{jkh}$  are defined,  respectively, by
  $$
  G^i_{jkh}=\dot{\partial}_hG^i_{jk},
 \quad
  L_{jkh}=-\frac{1}{2}F\ell_i G^i_{jkh},
  $$
 where $\ell_i:=\dot{\partial}_iF$ is  the  normalized supporting element.

  \begin{definition}
A Finsler manifold $(M,F)$ is said to be \textit{Berwald} if the Berwald tensor $G^{h}_{ijk}$ vanishes identically.
\end{definition}

  \begin{definition}
A Finsler manifold $(M,F)$  is called \textit{Landsberg} if the Landsberg tensor $L_{ijk}$ vanishes identically.
\end{definition}

It is clear that every Berwald space is  Landsberg. Whether there
are Landsberg spaces which are not of Berwald type is a long-standing question
in Finsler geometry, which is still open.  Despite the intense effort by many Finsler
geometers, it is not known an example of a regular  non Berwaldian Landsberg space. However, there are some classes of non-regular Landsberg metrics which are not Berwladian. These classes were obtained by G. S. Asanov and later were generalized by Z. Shen. Moreover, these classes  are special $(\alpha,\beta)$-metrics.

\medskip

Let $\alpha$ be a Riemannian metric, $\beta$ a 1-form on $M$. Locally,
$$\alpha \underset{(\mathcal{U})}{=} a_{ij}\,dx^i\otimes dx^j, \quad \beta \underset{(\mathcal{U})}{=} b_{i}\,dx^i.$$

The Riemannian metric $\alpha$ induces naturally a Finsler function $F_\alpha$ on $TM$ given by $F_\alpha(v):=\sqrt{\alpha_{\tau(v)}(v,v)}$. Similarly, the 1-form $\beta$ can be interpreted as a smooth function
$$\overline{\beta}:TM\longrightarrow \Real, \quad v\longmapsto \overline{\beta}(v):=\beta_{\tau(v)}(v).$$
Locally,
$$F_\alpha\underset{(\mathcal{U})}{=}\sqrt{(a_{ij}\circ\tau ) y^iy^j}, \quad \overline{\beta}\underset{(\mathcal{U})}{=}(b_i\circ \tau )y^i.$$
In what follows, as usual, we shall simply write $\alpha$ and $\beta$ instead of $F_\alpha$ and $\overline{\beta}$, respectively.

For any $p\in M$, we define
$$\| \beta_p\|_\alpha:=\sup_{v\in T_pM\backslash\{0_p\}}\frac{\beta(v)}{\alpha(v)}.$$

An  $(\alpha,\beta)$-metric for $M$ is a function $F$ on $\T M:=\undersymm{\bigcup}{x\in M}(T_xM\backslash\{0_x\})$ defined by
$$F:=\alpha \phi(s):=\alpha(\phi\circ s), \quad s:=\frac{\beta}{\alpha},$$
where $\phi :(-b_0,b_0)\longrightarrow \Real$ is a smooth function $(b_0>0)$.
For more details regarding the regularity or non-regularity of the $(\alpha,\beta)$-metrics, we refer to \cite{Shen_example}.

 \bigskip

 The geodesic spray of an  $(\alpha,\beta)$-metric is given by

 \begin{equation}
 \label{spray_alpha-beta_metric}
 G^i=G_\alpha^i+\alpha Q s^i_{\,\,0}+\Theta\set{-2\alpha Q s_0+r_{00}}\set{\frac{y^i}{\alpha}+\frac{Q'}{Q-sQ'}b^i},
 \end{equation}
 where
 $$r_{ij}=\frac{1}{2}(b_{i|j}+b_{j|i}), \quad s_{ij}=\frac{1}{2}(b_{i|j}-b_{j|i}),$$
 $$r_{00}:=r_{ij}y^iy^j, \quad s_{i0}:=s_{ij}y^j, \quad s^i_{\,\,j}:=s_{hj}a^{ih},\quad s_i:=s_{ij}b^j,\quad b^i:=b_ja^{ij}$$
 \begin{equation}
 \label{Q}
 Q(t):=\frac{\phi'(t)}{\phi(t)-t\phi'(t)}, \quad t\in (-b_0,b_0)
 \end{equation}
 \begin{equation}
 \label{Theta}
 \Theta(t):=\frac{Q(t)-tQ'(t)}{2(1+tQ(t)+(b^2-t^2)Q'(t))},
 \end{equation}
  the symbol  $|$  refers to the covariant derivative with respect to the Levi-Civita connection of $\alpha$ and $Q'$ (resp. $\phi'$) mean the derivative of $Q$ (resp. $\phi$) with respect to $t$.  In what follows,  we use the notations $Q(s):=Q\circ s$, $\Theta(s):=\Theta\circ s$.

  \bigskip
  The following class of $(\alpha,\beta)$-metrics is a class of Landsberg metrics which are not Berwaldian. This class has been obtained by Z. Shen \cite{Shen_example}.
  $$F=\alpha \phi(s),$$
\begin{equation}
\label{Shen_class}
\phi(s)=c_4\sqrt{1+s\left(c_1\sqrt{1-s^2}+c_3s\right)} \exp\left(\frac{c_1\arctan \psi}{\sqrt{(2+c_3)^2-(c_1^2+c_3^2)}}\right),
\end{equation}
where $\psi$ is given by any of the following formulas
$$\psi=\frac{\left(c_3 r +(2+c_3)(c_1+r)\right)s+\left( r (c_1+r)-(2+c_3)c_3\right)\sqrt{1-s^2}}{\left(c_3 s+(c_1+r )\sqrt{1-s^2}\right)\sqrt{(2+c_3^2)-r^2}},$$
$$\psi=\frac{\left((2+c_3)c_3+r(r-c_1)\right)s+\left(c_3r-(2+c_3)(r-c_1)\right)\sqrt{1-s^2}}{\left(c_3 \sqrt{1-s^2}+(r-c_1)s\right)\sqrt{(2+c_3^2)-r^2}},$$
where $r:=\sqrt{c_1^2+c_3^2}$,
 $c_1$, $c_3$, $c_4$ are constants with $c_1\neq 0$, $1+c_3b_0>0$, $c_4>0$ $k$ is a non zero scalar function on $M$. Moreover,
$$b=b_0,\quad b_{i|j}=b_{j|i}, \quad  b_{i|j}=k (a_{ij}-b_ib_j). $$

\medskip
Particularly,  when $c_3=0$ and $c_1=g$
\begin{equation}
\label{Asanov_class}
\phi(s)=c_4\sqrt{1+gs\sqrt{1-s^2}} \exp\left(\frac{g\arctan \psi}{\sqrt{4-g^2}}\right),
\end{equation}
where $\psi$, in this case,  is given by
\[\psi=  \left\{
\begin{array}{ll}
     \frac{2s+g\sqrt{1-s^2}}{\sqrt{4-g^2}\sqrt{1-s^2}}, & g> 0 \\
     &\\
      -\frac{gs+2\sqrt{1-s^2}}{s\sqrt{4-g^2}} , & g< 0 \\
\end{array}
\right.
\]
This class has been obtained by  G. S. Asanov \cite{Asanov}.
  These classes are almost regular Landsberg  metrics which are not  Berwladain and they are Berwaldain if and only if $k=0$.

\medskip

Z. Shen showed that the class of almost regular $(\alpha,\beta)$-metrics $F=\alpha \phi(\beta/\alpha)$ might be singular or even not defined in two extreme directions $y\in T_xM$ with $\beta(x,y)=\pm b_0\alpha(x,y)$.

On the other hand, in \cite{Elgendi-ST_condition}, the author introduced the concept of  $\sigma$T-condition; that is, a Finsler manifold $(M,F)$ satisfies the $\sigma$T-condition if   there is a function $\sigma(x)$ on $M$ such that
$$\sigma_hT^h_{ijk}=0, \quad \sigma_h:=\frac{\partial \sigma}{\partial x^h}, $$
where $T^h_{ijk}$ is the T-tensor. Moreover, it was proven that the  $(\alpha,\beta)$-metrics satisfy the $\sigma$T-condition
 if and only if these metrics are given by the  class \eqref{Shen_class}, in this case $\sigma_i=f(x)b_i$, for some function $f(x)$.

 By the help of \cite{Elgendi-LBp},  we can conclude that, without loss of generality, the non-regular examples of non-Berwaldian Landsberg metrics can be chosen in such a way  that it is a conformal transformation of a Berwald or Minkoweski metrics by a function $f(x^1)$ and hence $\beta=f(x^1) y^1$, i.e $b_1=f(x^1)$. Consequently,  the  directions of singularities of the metric will be $(\pm 1,0,...,0)$. So, if the Finsler function has extreme directions in the directions of  $ y^1$, say,  then  $\beta$ can  be in the form $\beta=f(x^1)y^1$. For this reason, in all of the following examples, we fix $\beta=f(x^1)y^1$ for smooth positive function $f(x^1)$ on the base manifold $M$.

As an illustration to the above paragraph, let's give the following simple case of an example given by Shen \cite{Shen_example}.
\begin{example}
Let $M= \mathbb{R}^3$, and   $\alpha=\sqrt{(y^1)^2+e^{2x^1}((y^2)^2+(y^3)^2)}$, $\beta=y^1$. Then, the Finsler function $F$   given by
\begin{equation}\label{Eq:1}
F=\sqrt{\alpha^2+\beta\sqrt{\alpha^2-\beta^2}}\,e^{\frac{1}{\sqrt{3}} \ \arctan\Big(\frac{2\beta}{\sqrt{3}\sqrt{\alpha^2-\beta^2}}+\frac{1}{\sqrt{3}}\Big)}.
\end{equation}
is a non-Berwaldain Landsberg metric.
\end{example}
One can obtain the example by applying the strategy mentioned in \cite{Elgendi-LBp}. Precisely, by the choice  $\alpha=\sqrt{e^{-2x^1}(x^1)^2+((y^2)^2+(y^3)^2)}$, $\beta=e^{-x^1} y^1$, the Finsler metric \eqref{Eq:1}, is Berwaldain. Now, applying the conformal transformation on $F$ by the function $e^{x^1}$ we will get the same metric. Moreover, making use of \cite{Elgendi-LBp}, the conformal transformation of $F$ by any positive smooth function $f(x^1)$ will yield a non-Berwadlian Landsberg metric.
\section{Some explicit examples and remarks}

We start this section, using the Maple program and NF-package, by studying some explicit examples of non-regular Landsberg metrics which are not Berwaldian. In fact, the calculations of such examples are not easy to do. However, we manage to use the Maple program to simplify the examples to very simple formulae (comparing to the formulae which are obtained by the NF-package). According to the simplified formulae, we point out some remarks. These remarks  are useful, not only to figure out some geometric properties of a geodesic spray of non-Berwaldian Landsberg metric but also  to establish  examples that have never been obtained.

\bigskip

From now on,  we fix the function $f(x^1)$ as a positive smooth function on $M$. The following examples,  can be obtained from \eqref{Shen_class} or \eqref{Asanov_class}. By choosing $\alpha=f(x^1)\sqrt{(y^1)^2+(y^2)^2+(y^3)^2}$ or $\alpha=f(x^1)\sqrt{(y^1)^2+y^2y^3}$ and $\beta=f(x^1)y^1$. As a verification, we propose a procedure based on Maple program and NF-package in the following  examples (the first example is mentioned in \cite{ND-Chern}).

\begin{example}\label{ex.1}
Let $M= \mathbb{R}^3$, and    $F$  be given by
$$F(x,y):=f(x^1)\sqrt{{(y^1)}^2+y^2y^3+y^1\sqrt{y^2y^3}}\,e^{\frac{1}{\sqrt{3}} \ \arctan\Big(\frac{2y^1}{\sqrt{3y^2y^3}}+\frac{1}{\sqrt{3}}\Big)}.$$
Using the NF-package, we can find the coefficients of the geodesic spray of $F$. But the formulae will be very complicated. One way to simplify these formulae is to copy them to a separate  worksheet and simplify them.  Then, we get the very simplified  coefficients of the geodesic spray of $F$ as follows:
$$G^1=\left((y^1)^2-y^2y^3\right)\frac{f'(x^1)}{2f(x^1)},\quad G^2=y^2\left(2y^1+\sqrt{y^2y^3}\right)\frac{f'(x^1)}{2f(x^1)},\quad G^3=y^3\left(2y^1+\sqrt{y^2y^3}\right)\frac{f'(x^1)}{2f(x^1)},$$
where $f'(x^1)$ is the derivative of $f(x^1)$ with respect to $x^1$.

To check if the Finsler function $F$ is Landsbergian and non-Berwaldian, we use Maple program and the following procedure.

\bigskip

\begin{maplegroup}
\begin{mapleinput}
\mapleinline{active}{1d}{restart}{\[{\it restart}\]}
\end{mapleinput}
\end{maplegroup}
\begin{maplegroup}
\begin{mapleinput}
\mapleinline{active}{1d}{F := f(x1)^2*(y3*y2+y1^2+y1*sqrt(y3*y2))
*exp((2/3)*arctan((2/3)*y1*sqrt(3)/sqrt(y3*y2)+(1/3)*sqrt(3))*sqrt(3)); }{\[\]}
\end{mapleinput}
\end{maplegroup}
\begin{maplegroup}
\begin{mapleinput}
\mapleinline{active}{1d}{G1 := (1/2)*(y1^2-y2*y3)*(diff(f(x1), x1))/f(x1); }{\[\]}
\end{mapleinput}
\end{maplegroup}
\begin{maplegroup}
\begin{mapleinput}
\mapleinline{active}{1d}{G2 := y2*(2*y1+sqrt(y3*y2))*(diff(f(x1), x1))/(2*f(x1)); }{\[\]}
\end{mapleinput}
\end{maplegroup}
\begin{maplegroup}
\begin{mapleinput}
\mapleinline{active}{1d}{G3 := y3*(2*y1+sqrt(y3*y2))*(diff(f(x1), x1))/(2*f(x1)); }{\[\]}
\end{mapleinput}
\end{maplegroup}
\begin{maplegroup}
\begin{mapleinput}
\mapleinline{active}{1d}{y1 := y[1]; 1; y2 := y[2]; 1; y3 := y[3]}{\[\]}
\end{mapleinput}
\mapleresult
\begin{maplelatex}
\mapleinline{inert}{2d}{y1 := y[1]}{\[\displaystyle {\it y1}\, := \,y_{{1}}\]}
\end{maplelatex}
\mapleresult
\begin{maplelatex}
\mapleinline{inert}{2d}{y2 := y[2]}{\[\displaystyle {\it y2}\, := \,y_{{2}}\]}
\end{maplelatex}
\mapleresult
\begin{maplelatex}
\mapleinline{inert}{2d}{y3 := y[3]}{\[\displaystyle {\it y3}\, := \,y_{{3}}\]}
\end{maplelatex}
\end{maplegroup}
\begin{maplegroup}
\begin{mapleinput}
\mapleinline{active}{1d}{printlevel := 3;                                                          \phantom{ghmmmm} for i to 3 do  for j to i do  for k to j do                                                                                                                                                                                                                \phantom{ghmmmm} Landsberg[i, j, k] := simplify((diff(F, y1))*(diff(G1, y[i], y[j], y[k]))    \phantom{ghmmmm} +(diff(F, y2))*(diff(G2, y[i], y[j], y[k]))+(diff(F, y3))*(diff(G3, y[i], y[j], y[k])))                                                                                  \phantom{ghmmmm}  end do;   end do ;  end do; }{\[\]}
\end{mapleinput}
\mapleresult
\begin{maplelatex}
\mapleinline{inert}{2d}{Landsberg[1, 1, 1] := 0}{\[\displaystyle {\it Landsberg}_{{1,1,1}}\, := \,0\]}
\end{maplelatex}
\mapleresult
\begin{maplelatex}
\mapleinline{inert}{2d}{Landsberg[2, 1, 1] := 0}{\[\displaystyle {\it Landsberg}_{{2,1,1}}\, := \,0\]}
\end{maplelatex}
\mapleresult
\begin{maplelatex}
\mapleinline{inert}{2d}{Landsberg[2, 2, 1] := 0}{\[\displaystyle {\it Landsberg}_{{2,2,1}}\, := \,0\]}
\end{maplelatex}
\mapleresult
\begin{maplelatex}
\mapleinline{inert}{2d}{Landsberg[2, 2, 2] := 0}{\[\displaystyle {\it Landsberg}_{{2,2,2}}\, := \,0\]}
\end{maplelatex}
\mapleresult
\begin{maplelatex}
\mapleinline{inert}{2d}{Landsberg[3, 1, 1] := 0}{\[\displaystyle {\it Landsberg}_{{3,1,1}}\, := \,0\]}
\end{maplelatex}
\mapleresult
\begin{maplelatex}
\mapleinline{inert}{2d}{Landsberg[3, 2, 1] := 0}{\[\displaystyle {\it Landsberg}_{{3,2,1}}\, := \,0\]}
\end{maplelatex}
\mapleresult
\begin{maplelatex}
\mapleinline{inert}{2d}{Landsberg[3, 2, 2] := 0}{\[\displaystyle {\it Landsberg}_{{3,2,2}}\, := \,0\]}
\end{maplelatex}
\mapleresult
\begin{maplelatex}
\mapleinline{inert}{2d}{Landsberg[3, 3, 1] := 0}{\[\displaystyle {\it Landsberg}_{{3,3,1}}\, := \,0\]}
\end{maplelatex}
\mapleresult
\begin{maplelatex}
\mapleinline{inert}{2d}{Landsberg[3, 3, 2] := 0}{\[\displaystyle {\it Landsberg}_{{3,3,2}}\, := \,0\]}
\end{maplelatex}
\mapleresult
\begin{maplelatex}
\mapleinline{inert}{2d}{Landsberg[3, 3, 3] := 0}{\[\displaystyle {\it Landsberg}_{{3,3,3}}\, := \,0\]}
\end{maplelatex}
\end{maplegroup}
\begin{maplegroup}
\begin{mapleinput}
\mapleinline{active}{1d}{Berwald2[2, 2, 2] := simplify(diff(G2, y[2], y[2], y[2]))}{\[ \]}
\end{mapleinput}
\mapleresult
\begin{maplelatex}
\mapleinline{inert}{2d}{}{\[\displaystyle {\it Berwald2}_{{2,2,2}}\, := -3/16\,{\frac {y_{{3}}{\frac {d}{d{\it x1}}}f \left( {\it x1} \right) }{y_{{2}} \sqrt{y_{{3}}y_{{2}}}\\
\mbox{}f \left( {\it x1} \right) }}\]}
\end{maplelatex}
\end{maplegroup}
This shows  that $(M,F)$ is a non-Berwaldian Landsberg manifold.
\end{example}

\medskip

\begin{example}\label{ex.2} Let
$M= \mathbb{R}^3$ and $F$ be given by
$${F}(x,y)=f(x^1)\sqrt{(y^1)^2+(y^2)^2+(y^3)^2+y^1\sqrt{(y^2)^2+(y^3)^2}}\,e^{\frac{1}{\sqrt{3}}\arctan\Big(\frac{2y^1}{\sqrt{3((y^2)^2+(y^3)^2)}}+\frac{1}{\sqrt{3}}\Big)}.$$

The coefficients of the geodesic spray of $F$ are given by
$$G^1=\left((y^1)^2-(y^2)^2-(y^3)^2\right)\frac{f'(x^1)}{2f(x^1)},\quad G^2=\left( y^2(2y^1+\sqrt{(y^2)^2+(y^3)^2})\right)\frac{f'(x^1)}{2f(x^1)},$$
$$ G^3=\left( y^3(2y^1+\sqrt{(y^2)^2+(y^3)^2})\right)\frac{f'(x^1)}{2f(x^1)}.$$
By the same method, as in the above example, we  calculate the Landsberg  and the Berwald tensors as follows
$$L_{ijk}=0, \quad G^h_{ijk}\neq 0, \ \, \text{for example,}\,\, \  G^2_{222}=-\frac{3}{2}\frac{(y^3)^4}{((y^2)^2+(y^3)^2)^{5/2}}\frac{f'(x^1)}{f(x^1)}.$$
Consequently, $(M,F)$ is a non-Berwaldian Landsberg manifold.
\end{example}

\medskip

The following example is $4$-dimensional manifold.
\begin{example}\label{ex.2} Let
$M= \mathbb{R}^4$ and    $F$  be given by
$$F(x,y):=f(x^1)\sqrt{(y^1)^2+y^2y^3+(y^4)^2+y^1\sqrt{y^2y^3+(y^4)^2}}\,e^{\frac{1}{\sqrt{3}} \ \arctan\Big(\frac{2y^1}{\sqrt{3}\sqrt{y^2y^3+(y^4)^2}}+\frac{1}{\sqrt{3}}\Big)}.$$
The coefficients of the geodesic spray of $F$ are given by
$$G^1=\left(y^1)^2-y^2y^3-(y^4)^2\right)\frac{f'(x^1)}{2f(x^1)},\quad G^2=\left(y^2(2y^1+\sqrt{y^2y^3+(y^4)^2})\right)\frac{f'(x^1)}{2f(x^1)},$$
$$ G^3=\left(y^3(2y^1+\sqrt{y^2y^3+(y^4)^2})\right)\frac{f'(x^1)}{2f(x^1)}, \quad  G^4=\left(y^4(2y^1+\sqrt{y^2y^3+(y^4)^2})\right)\frac{f'(x^1)}{2f(x^1)}.$$
By the same way, we have
$$L_{ijk}=0, \quad G^h_{ijk}\neq 0, \, \text{for example,}\,\,  G^2_{222}=-\frac{3}{16}\frac{(y^3)^2(y^2y^3+2(y^4)^2))}{(y^2y^3+(y^4)^2)^{5/2}}\frac{f'(x^1)}{f(x^1)}.$$
Consequently,  $(M,F)$ is a non-Berwaldian Landsberg space.
\end{example}

\subsection{Some  remarks}

 Depending on the examples mentioned above, we have the following remarks:

\medskip

$\bullet$ The spray of a Landsberg space is very specific; in the extreme directions (say, $(\pm 1,0,...,0)$), the  coefficient $G^1$ of the spray  is quadratic  and in the other directions, the  coefficients are not quadratic but they are projectively flat ($G^\mu=Py^\mu$, where $\mu=2,...,n$). This  inspires us two things; one thing is how to obtain new solutions for the unicorn problem and this will be discussed  in the next section in details. The other thing is the following point.

\medskip

$-$ Without loss of generality, we fix the extreme directions of a Landsberg metric as $(\pm 1 , 0,...,0)$, then we have

$$G^1=f_{ij}(x^1)y^iy^j, \quad G^\mu=Py^\mu,$$
where $f_{ij}$ are some functions on $\Real$. Then, the Berwald tensor is given by
$$G^1_{ijk}=0,\quad G^h_{1jk}=0,\quad G^\mu_{\lambda \nu \gamma}=P_{\lambda \nu \gamma} \ y^\mu +P_{\lambda \nu } \ \delta^\mu_\gamma+P_{\nu \gamma  } \ \delta^\mu_\lambda+P_{\gamma\lambda  } \ \delta^\mu_\nu, $$
where $P$ is  defined and  smooth  on an open subset of $\T M$, $P_i:=\dot{\partial}_i P, P_{ij}:=\dot{\partial}_i\dot{\partial}_j P, P_{ijk}:=\dot{\partial}_i\dot{\partial}_j \dot{\partial}_k P$ and moreover, $P_{1j}=0$. Throughout, the Greek letters $\lambda, \mu, \nu, \gamma$ run over  $2,...,n$.

Therefore, the Landsberg tensor takes the form

\begin{equation}
\label{New_Lands_tensor}
L_{\lambda \nu \gamma}=-\frac{1}{2}F(P_{\lambda \nu \gamma} \ \ell_\mu y^\mu +P_{\lambda \nu } \ \ell_\gamma+P_{\nu \gamma  } \ \ell_\lambda+P_{\gamma\lambda  } \ \ell_\nu).
\end{equation}
In this case,  the Landsberg tensor is given in terms of one function $P$ besides the Finsler function. This property of the spray makes the condition, for a metric,  of being Landsberg much weaker; it is possible to find a function $P$ satisfies $L_{\lambda \nu \gamma}=0$ instead of the existence of $n$   functions  $G^i$ such that $L_{ijk}=-\frac{1}{2}F\ell_h G^h_{ijk}=0$.

\bigskip

$\bullet$ The coefficients  of the geodesic spray of the class \eqref{Shen_class} are given by
$$G^i=G_\alpha^i+\frac{c_1k\sqrt{\alpha^2-(\beta/b_0)^2}}{2(1+c_3b_0^2)}\set{ b_0^2y^i-\beta b^i+\frac{c_3k}{c_1}\sqrt{\alpha^2-(\beta/b_0)^2}b^i } .$$
Choosing $\alpha=f(x^1)\sqrt{(y^1)^2+\varphi(y^\mu)}$, $\beta=f(x^1)y^1$ and by Subsection 4.1, the coefficients of the geodesic spray are given by
\begin{equation}
\label{Lands_type_Spray}
G^1=\left( (y^1)^2-\varphi(y^\mu)+\frac{c_3k(\alpha^2-\beta^2)}{(1+c_3)}\right)\frac{f'(x^1)}{2f(x^1)},\,\, G^\mu=\frac{f'(x^1)}{f(x^1)}\left( y^1+ \frac{c_1k\sqrt{\alpha^2-\beta^2}}{2(1+c_3)}\right)y^\mu
\end{equation}

\section{New  solutions for the unicorn problem}

 As we discussed in the previous section, by the help of computer and  Maple program, we have  examples in which  the spray coefficients are very simple. Deforming  these sprays, that we got, in a very specific way and  by using the metrizability of the new sprays we obtain the solutions. The interesting thing in this section is that we  establish  solutions for the unicorn's problem that have  never  been investigated.

According to our previous discussion, the spray of a Landsberg space (non-regular), without loss of generality,  has a very specific form. That is, in the direction of singularity the spray is quadratic and in the other directions it is  projectively flat. This inspires us to get new solutions for the unicorn problem. The idea, as will be discussed soon, is to start by a one spray of a Landsberg metric and then make a small deformation in it. Checking the metrizability of the new spray, if it is metrizable then we get a solution. Sure not any deformations will yield a new Landsberg metric but we have to try different deformations.

\subsection{Special Riemannian metric}
From now on, without loss of generality, we use   $\beta=f(x^1)y^1$ and $\alpha=f(x^1)\sqrt{(y^1)^2+\varphi(\hat{y})}$, where $f(x^1)$ is a positive function on $\Real$ and $\varphi$ is arbitrary quadratic function in $\hat{y}$ and $\hat{y}$ stands for the variables $y^2,...,y^n$.  The function $\varphi$ should be chosen in  such a way  the metric tensor of $\alpha$ is non-degenerate. Precisely, $\varphi$ can be written in the form $\varphi=c_{\lambda\mu}y^\lambda y^\mu$, where $c_{\lambda\mu}$ is a non-singular symmetric $n-1\times n-1$ matrix of arbitrary constants.

 In this case, the components of the metric tensor $a_{ij}$ (and its inverse $a^{ij}$, resp.) of $\alpha$  are given by
$$a_{11}=f(x^1)^2, \quad a_{1\mu}=0, \quad a_{\lambda\mu}=f(x^1)^2c_{\lambda\mu}$$
$$a^{11}=\frac{1}{f(x^1)^2}, \quad a^{1\mu}=0, \quad a^{\lambda\mu}=\frac{1}{f(x^1)^2}c^{\lambda\mu},$$
where $c^{\lambda\mu}$ is the inverse matrix of $c_{\lambda\mu}$.
Using \eqref{geodesic_spray}, we can find the geodesic spray $G_\alpha^i$ of $\alpha$, as follows
$$G_\alpha^1=\left(\frac{2f(x^1)^2(y^1)^2-\alpha^2}{2f(x^1)^3}\right)f'(x^1), \quad G_\alpha^\mu=\frac{f'(x^1)}{f(x^1)}y^1y^\mu.$$
The coefficients $\gamma^h_{ij}:=\dot{\partial}_i\dot{\partial}_j G_\alpha^h$ of Levi-Civita connection of $\alpha$ are given by
$$\gamma^1_{11}=\frac{f'(x^1)}{f(x^1)},\quad \gamma^1_{\lambda\mu}=-\frac{f'(x^1)}{f(x^1)}c_{\lambda\mu},\quad \gamma^\mu_{1\lambda}=\frac{f'(x^1)}{f(x^1)}\delta^\mu_\lambda,\quad \gamma^1_{1\mu}= \gamma^\mu_{11}= \gamma^\mu_{\lambda\nu}=0.$$
Since, $b_1=f(x^1)$ and $b_\mu=0$,  one can, easily, have
$$b^1=\frac{1}{f(x^1)}, \quad b^\mu=0,\quad b^2=b_ib^i=1,$$
$$b_{1|1}=0,\quad b_{1|\mu}=0,\quad b_{\mu|1}=0,\quad b_{\lambda|\mu}=f'(x^1) c_{\lambda\mu} $$
from which we get $b_{i|j}=b_{j|i}$ and hence
$$s_{ij}=0,\quad r_{ij}=b_{i|j},\quad r_{00}=\frac{(\alpha^2-\beta^2)f'(x^1)}{f(x^1)^2}.$$
Moreover,
$$b_{i|j}=k(a_{ij}-b_ib_j), \quad k=\frac{f'(x^1)}{f(x^1)^2}.$$

\subsection{First class}

In this subsection, we give a new and very simple class of non-Berwaldain Landsberg metrics. We have a remark about  this class at the end of this paper. We use Maple program and metrizability criteria to get the solutions that we are looking for. From now on, the following procedure will be pursued throughout.   Let's start with the following two examples.

\begin{example}\label{ex.4} Let
$M= \mathbb{R}^3$. Consider the coefficients $G^i$ of a  spray $S$ are given by
$$G^1=\left((y^1)^2-y^2y^3\right)\frac{f'(x^1)}{2f(x^1)},\quad G^2=Py^2,\quad G^3=Py^3,$$
where the function $P$ is given by
$$ P=\left(y^1+\sqrt{y^2y^3}\right)\frac{f'(x^1)}{f(x^1)}.$$

It is known that a spray $S$ is Finsler metrizable, by a Finsler function $F$, if and only if  the system \eqref{metrizable_system} has non-degenerate solutions.
 We add the Landsberg condition  $\ell_h G^h_{ijk}=0$, $\ell_h=\dot{\partial}_hF$ to above system and then solve it by Maple program as follows:

\begin{maplegroup}
\begin{mapleinput}
\mapleinline{active}{1d}{restart}{\[\]}
\end{mapleinput}
\end{maplegroup}
\begin{maplegroup}
\begin{mapleinput}
\mapleinline{active}{1d}{F := f(x1)*f1(y1, y2, y3); }{\[\]}
\end{mapleinput}
\end{maplegroup}
\begin{Maple Normal}{
\begin{Maple Normal}{
}\end{Maple Normal}
}\end{Maple Normal}
\begin{maplegroup}
\begin{mapleinput}
\mapleinline{active}{1d}{P := (y1+sqrt(y2*y3))*(diff(f(x1), x1))/f(x1); }{\[\]}
\end{mapleinput}
\end{maplegroup}
\begin{maplegroup}
\begin{mapleinput}
\mapleinline{active}{1d}{G1 := (1/2)*(y1^2-y2*y3)*(diff(f(x1), x1))/f(x1); }{\[\]}
\end{mapleinput}
\end{maplegroup}
\begin{maplegroup}
\begin{mapleinput}
\mapleinline{active}{1d}{G2 := P*y2; }{\[\]}
\end{mapleinput}
\end{maplegroup}
\begin{maplegroup}
\begin{mapleinput}
\mapleinline{active}{1d}{G3 := P*y3; }{\[\]}
\end{mapleinput}
\end{maplegroup}
\begin{maplegroup}
\begin{mapleinput}
\mapleinline{active}{1d}{}{\[\]}
\end{mapleinput}
\end{maplegroup}
\begin{maplegroup}
\begin{mapleinput}
\mapleinline{active}{1d}{sys1 := [                                                                   \phantom{mmmmmmmm}    diff(F, x1)-(diff(G1, y1))*(diff(F, y1))-(diff(G2, y1))*(diff(F, y2))-(diff(G3, y1))*(diff(F, y3)) = 0,                                                                 \phantom{mmmmmmmm} diff(F, x2)-(diff(G1, y2))*(diff(F, y1))-(diff(G2, y2))*(diff(F, y2))-(diff(G3, y2))*(diff(F, y3)) = 0,                                                     \phantom{mmmmmmmm}   diff(F, x3)-(diff(G1, y3))*(diff(F, y1))-(diff(G2, y3))*(diff(F, y2))-(diff(G3, y3))*(diff(F, y3)) = 0,                                                                  \phantom{mmmmmmmm}  (diff(F, y2))*(diff(G2, y2, y2, y2))+(diff(F, y3))*(diff(G3, y2, y2, y2)) = 0,                                                                                     \phantom{mmmmmmmm}         (diff(F, y2))*(diff(G2, y2, y2, y3))+(diff(F, y3))*(diff(G3, y2, y2, y3)) = 0,                                                                               \phantom{mmmmmmmm}   (diff(F, y2))*(diff(G2, y3, y3, y2))+(diff(F, y3))*(diff(G3, y3, y3, y2)) = 0,                                                                                      \phantom{mmmmmmmm}           (diff(F, y2))*(diff(G2, y3, y3, y3))+(diff(F, y3))*(diff(G3, y3, y3, y3)) = 0,                                                                         \phantom{mmmmmmmm}  y1*(diff(F, y1))+y2*(diff(F, y2))+y3*(diff(F, y3)) = F]; }{\[\]}
\end{mapleinput}
\end{maplegroup}
\begin{maplegroup}
\begin{mapleinput}
\mapleinline{active}{1d}{pdsolve(sys1)}{\[{\it pdsolve} \left( {\it sys1} \right) \]}
\end{mapleinput}
\mapleresult
\begin{maplelatex}
\mapleinline{inert}{2d}{{f(x1) = 0, f1(y1, y2, y3) = f1(y1, y2, y3)}, {f(x1) = _C1, f1(y1, y2, y3) = _F1(y2/y1, y3/y1)*y1}, {f(x1) = f(x1), f1(y1, y2, y3) = _C1*exp(y1/(y1+sqrt(y2*y3)))*(y1+sqrt(y2*y3))}}{\[\displaystyle   \left\{ f \left( {\it x1} \right) =f \left( {\it x1} \right) ,{\it f1} \left( {\it y1},{\it y2},{\it y3}\\
\mbox{} \right) ={\it \_C1}\,{{\rm e}^{{\frac {{\it y1}}{{\it y1}+ \sqrt{{\it y2}\,{\it y3}\\
\mbox{}}}}}} \left( {\it y1}+ \sqrt{{\it y2}\,{\it y3}\\
\mbox{}} \right)  \right\} \]}
\end{maplelatex}
\end{maplegroup}

Consequently,  the spray $S$ is metrizable by the non-regular Finsler function
$${F}(x,y)=f(x^1)(y^1+\sqrt{y^2y^3})\,e^{\frac{y^1}{y^1+\sqrt{y^2y^3}}}.$$
Moreover, one  can see easily
$$G^h_{ijk}\neq 0\,\quad \left(G^2_{222}=-\frac{3}{8}\frac{y^3}{y^2\sqrt{y^2y^3}}\frac{f'(x^1)}{f(x^1)}\right),\quad L_{ijk}=0.$$
Hence, $(M,F)$ is a non Berwaldain Landsberg space.
\end{example}

\medskip

Considering a   more general case, we have the following.

\begin{example}\label{New_example_1a} Let
$M= \mathbb{R}^3$ and the spray  $S$ be given by
$$G^1=\frac{a^2(y^1)^2-y^2y^3}{2a^2}\frac{f'(x^1)}{f(x^1)},\quad G^2=Py^2,\quad G^3=Py^3,$$
where the function $P$ is given by
$$ P=\left(y^1+ \frac{1}{a}\sqrt{y^2y^3}\right)\frac{f'(x^1)}{f(x^1)}.$$
By solving the system for metrizability and Landsberg restriction, we have  the following non regular Finsler function
$${F}(x,y)=f(x^1)(a y^1+\sqrt{y^2y^3})\,e^{\frac{ay^1}{ay^1+\sqrt{y^2y^3}}}.$$
Moreover, one  can see easily that
$$G^h_{ijk}\neq 0\,\quad \left(G^2_{222}=-\frac{3}{8a}\frac{y^3}{y^2\sqrt{y^2y^3}}\frac{f'(x^1)}{f(x^1)}\right),\quad L_{ijk}=0.$$
Hence, $(M,F)$ is a non Berwaldain Landsberg space.
\end{example}

\medskip

\begin{example}\label{New_example_1b} Let
$M= \mathbb{R}^3$ and the spray  $S$ be given by
$$G^1=\frac{a^2(y^1)^2-(y^2)^2-(y^3)^2}{2a^2}\frac{f'(x^1)}{f(x^1)},\quad G^2=Py^2,\quad G^3=Py^3,$$
where the function $P$ is given by
$$ P=\left( y^1+ \frac{1}{a}\sqrt{(y^2)^2+(y^3)^2}\right)\frac{f'(x^1)}{f(x^1)}.$$
The spray $S$ is metrizable by the non-regular Finsler function
$${F}(x,y)=f(x^1)(a y^1+\sqrt{(y^2)^2+(y^3)^2})\,e^{\frac{ay^1}{ay^1+\sqrt{(y^2)^2+(y^3)^2}}}.$$
Moreover,
$$G^h_{ijk}\neq 0\,\quad \left(G^2_{222}=\frac{3}{a}\frac{(y^3)^4}{((y^2)^2+(y^3)^2)^{5/2}}\frac{f'(x^1)}{f(x^1)}\right),\quad L_{ijk}=0.$$
Hence, $(M,F)$ is a non-Berwaldain Landsberg space.
\end{example}

\medskip

Summing up, we introduce the first class of Landsberg metrics which are not Berwaldian.

\begin{theorem}\label{First_class}
For $n\geq 3$, the  class
$${F}=\left(a \beta+\sqrt{\alpha^2-\beta^2}\right)\,e^{\frac{a \beta}{a\beta+\sqrt{\alpha^2-\beta^2}}}$$
on an $n$-dimensional manifold $M$ is a class of Landsberg metrics which are not Berwaldian, where  $\beta$ and $\alpha$ are given in Subsection 4.1 and $a\neq 0$. Moreover, the geodesic spray of $F$ is given by
$$G^1=\left(\frac{2f(x^1)^2(y^1)^2-\alpha^2}{2f(x^1)^2}+\frac{a^2-1}{2a^2}\frac{\alpha^2-\beta^2}{f(x^1)^2}\right)\frac{f'(x^1)}{f(x^1)},\quad G^\mu=Py^\mu,$$
where the function $P$ is given by
$$ P=\left(y^1+ \frac{1}{af(x^1)}\sqrt{\alpha^2-\beta^2}\right)\frac{f'(x^1)}{f(x^1)}.$$
\end{theorem}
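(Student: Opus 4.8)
Looking at the structure, the claim is really two separate assertions bundled together: first, that the spray written down actually \emph{is} the geodesic spray of the given $(\alpha,\beta)$-metric $F$, and second, that this spray is Landsbergian but not Berwaldian. I would attack these in that order, since the second part is essentially forced once the spray is in the stated normal form $G^1 = f_{ij}y^iy^j$, $G^\mu = Py^\mu$ with $P_{1j}=0$.

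Let me think about the first part. The claim says $F = (a\beta + \sqrt{\alpha^2-\beta^2})\,e^{a\beta/(a\beta+\sqrt{\alpha^2-\beta^2})}$ is an $(\alpha,\beta)$-metric, so I would begin by extracting $\phi(s)$ from $F = \alpha\phi(s)$ with $s=\beta/\alpha$. Dividing through by $\alpha$ gives $\phi(s) = (as + \sqrt{1-s^2})\,e^{as/(as+\sqrt{1-s^2})}$. With $\beta=f(x^1)y^1$ and $\alpha = f(x^1)\sqrt{(y^1)^2+\varphi(\hat y)}$ exactly as fixed in Subsection 4.1, all the invariants $r_{ij}, s_{ij}, b^i, r_{00}$ have already been computed there; in particular $s_{ij}=0$, $r_{00} = (\alpha^2-\beta^2)f'/f^2$, and $b_{i|j}=k(a_{ij}-b_ib_j)$ with $k = f'/f^2$ and $b^2=1$. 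I would substitute this $\phi$ into \eqref{Q} to get $Q$, then into \eqref{Theta} to get $\Theta$, and finally feed everything into the general $(\alpha,\beta)$-spray formula \eqref{spray_alpha-beta_metric}. Because $s_{ij}=0$, the $s^i_{\,0}$ and $s_0$ terms drop out entirely, leaving $G^i = G^i_\alpha + \Theta\,r_{00}\bigl(\tfrac{y^i}{\alpha} + \tfrac{Q'}{Q-sQ'}b^i\bigr)$. Using the $G^i_\alpha$ already listed in Subsection 4.1 together with $b^1 = 1/f$, $b^\mu=0$, I would verify that the $\mu$-components collapse to $G^\mu = Py^\mu$ with the stated $P$ and that the $1$-component matches the stated $G^1$. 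This is the computational heart and the step I expect to be the main obstacle: the differentiations of that exponential $\phi$ and the algebraic simplification of $\Theta$ and $Q'/(Q-sQ')$ down to the clean closed forms are precisely the kind of tedious manipulation the paper elsewhere offloads to Maple, so I would either carry it out symbolically or invoke \eqref{Lands_type_Spray}, matching this metric against the class \eqref{Shen_class} with the appropriate constants.

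For the second part I would lean on the normal-form machinery developed in the Remarks of Section~3. Once the spray is confirmed to have the shape $G^1 = f_{ij}(x^1)y^iy^j$ (quadratic in $y$, hence $G^1_{ijk}=0$) and $G^\mu = Py^\mu$ with $P$ depending only on directions through $\hat y$ so that $P_{1j}=0$, the Berwald tensor reduces to $G^\mu_{\lambda\nu\gamma} = P_{\lambda\nu\gamma}y^\mu + P_{\lambda\nu}\delta^\mu_\gamma + P_{\nu\gamma}\delta^\mu_\lambda + P_{\gamma\lambda}\delta^\mu_\nu$ and the Landsberg tensor to the form \eqref{New_Lands_tensor}. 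To show $L_{\lambda\nu\gamma}=0$ I would verify the single scalar identity that makes \eqref{New_Lands_tensor} vanish — equivalently, that $P$ together with $F$ satisfies the metrizability-plus-Landsberg system that was solved explicitly in Examples~\ref{New_example_1a} and~\ref{New_example_1b}; those examples are exactly the $\varphi=y^2y^3$ and $\varphi=(y^2)^2+(y^3)^2$ specializations of the present class with $f(x^1)\equiv 1$ absorbed, so the theorem is the assertion that the same cancellation persists for arbitrary nondegenerate $\varphi = c_{\lambda\mu}y^\lambda y^\mu$ in dimension $n\geq 3$.

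Finally, to establish non-Berwaldian I only need exhibit one nonvanishing component of $G^h_{ijk}$, and the pattern from the worked examples shows the natural candidate is $G^2_{222}$: since $P$ contains $\sqrt{\alpha^2-\beta^2} = f(x^1)\sqrt{\varphi(\hat y)}$, a third $y^2$-derivative of $Py^2$ produces a term proportional to $f'/f$ that is manifestly nonzero whenever $f' \neq 0$. I would record this explicit component to certify $G^h_{ijk}\neq 0$, completing the argument that $(M,F)$ is Landsberg but not Berwald. The only genuinely delicate point beyond bookkeeping is confirming that the stated $G^1$ and $P$ really are the outputs of \eqref{spray_alpha-beta_metric} for \emph{this} $\phi$; everything downstream is then a direct consequence of the normal-form identities already in hand.
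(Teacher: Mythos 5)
Your route for the first two-thirds of the argument coincides with the paper's: extract $\phi(s)=\left(as+\sqrt{1-s^2}\right)e^{as/(as+\sqrt{1-s^2})}$, compute $Q$, $\Theta$ and $Q'/(Q-sQ')$ from \eqref{Q} and \eqref{Theta}, feed them together with the data of Subsection 4.1 (where $s_{ij}=0$ kills the $s^i_{\,\,0}$ and $s_0$ terms) into \eqref{spray_alpha-beta_metric} to obtain the stated $G^1$ and $G^\mu=Py^\mu$, and then verify $L_{\lambda\nu\gamma}=0$ through the normal-form expression \eqref{New_Lands_tensor}; the paper does exactly this, computing $\ell_\mu$, $P_{\lambda\mu}$, $P_{\lambda\mu\nu}$ and $y^\mu\ell_\mu$ explicitly and substituting. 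One caution: your fallback suggestion for the Landsberg step --- that the cancellation ``persists'' from the worked three-dimensional examples with $\varphi=y^2y^3$ and $\varphi=(y^2)^2+(y^3)^2$ to arbitrary nondegenerate $\varphi=c_{\lambda\mu}y^\lambda y^\mu$ --- is an assertion, not a proof; it is harmless only because your primary route is the direct verification.

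The genuine gap is in the non-Berwald step. You certify non-Berwaldness by the single component $G^2_{222}$, claiming that a third $y^2$-derivative of $Py^2$ is ``manifestly nonzero'' when $f'\neq 0$. Since $\sqrt{\alpha^2-\beta^2}=f(x^1)\sqrt{\varphi(\hat y)}$, what one actually gets is
$$G^2_{222}=\frac{f'(x^1)}{a f(x^1)}\left(y^2\,\dot{\partial}_2\dot{\partial}_2\dot{\partial}_2\sqrt{\varphi}+3\,\dot{\partial}_2\dot{\partial}_2\sqrt{\varphi}\right),$$
and for an \emph{arbitrary} nondegenerate quadratic form $\varphi$ it is not manifest that this combination is not identically zero; one must exclude, for instance, that $\dot{\partial}_2\dot{\partial}_2\sqrt{\varphi}$ vanishes, i.e.\ that $c_{22}\varphi=(c_{2\mu}y^\mu)^2$. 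Excluding such possibilities requires precisely the argument the paper uses instead, applied to the whole tensor rather than to one component: if the Berwald tensor vanished, $P$ --- hence $\sqrt{\varphi}$ --- would be linear in $y$, forcing $c_{\lambda\nu}=\theta_\lambda\theta_\nu$, a rank-one matrix, which contradicts the nondegeneracy of $c_{\lambda\mu}$ for $n\geq 3$. So to complete your proof you should either adopt the paper's degeneracy argument or supply the analogous perfect-square/rank-one argument showing your chosen component cannot vanish identically. Finally, note that both your argument and the paper's implicitly assume $f'\neq 0$: when $f$ is constant all spray coefficients vanish and the metric is Berwald (the $k=0$ case mentioned in Section 2), so that hypothesis should be made explicit.
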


\begin{proof}
To find the geodesic spray of $F$, we have to calculate the functions $Q$ and $\Theta$ which are defined by \eqref{Q} and \eqref{Theta}, resp. One can calculate them by hand, or instead by using the Maple program.  We have the following
$$\phi(s)=\left(a s+\sqrt{1-s^2}\right)\,e^{\frac{a s}{as+\sqrt{1-s^2}}},$$
$$Q=2a\sqrt{1-s^2}+(a^2-1)s,\quad \Theta=\frac{1}{a\sqrt{1-s^2}},$$
moreover, we have
$$\frac{Q'}{Q-sQ'}=\frac{(a^2-1)\sqrt{1-s^2}-2as}{2a}.$$
Finally, making use of  Subsection 4.1 and \eqref{spray_alpha-beta_metric}, we get the formulae for $G^1$ and $G^\mu$.

Since the geodesic spay of $F$ is quadratic in one direction and projectively flat in the others,  the Landsberg tensor takes the form \eqref{New_Lands_tensor}. Now, to compute the Landsberg tensor, we have the following
\begin{equation}
\label{L_mu_1st_class}
\ell_\mu=\frac{f(x^1)\, e^{\frac{a \beta}{a\beta+\sqrt{\alpha^2-\beta^2}}}}{2(a\beta+\sqrt{\alpha^2-\beta^2})}\,\alpha^2_{y^\mu},
\end{equation}
where, for simplicity, we use the notation $\alpha^2_{y^\mu}:=\dot{\partial}_\mu \alpha^2$. Also, we get
$$P_\mu=\frac{1}{2a}\frac{\alpha^2_{y^\mu}}{\sqrt{\alpha^2-\beta^2}}\frac{f'(x^1)}{f(x^1)^2},$$
\begin{equation}
\label{P_lambda_mu_1st_class}
P_{\lambda\mu}=\frac{1}{2a}\left(\frac{\alpha^2_{y^\lambda y^\mu}}{\sqrt{\alpha^2-\beta^2}}-\frac{\alpha^2_{y^\lambda }\alpha^2_{ y^\mu}}{2(\alpha^2-\beta^2)^{3/2}}\right)\frac{f'(x^1)}{f(x^1)^2},
\end{equation}

\begin{equation}
\label{P_sigma_lambda_mu_1st_class}
P_{\lambda\mu\nu}=\frac{1}{2a}\left(\frac{3\alpha^2_{y^\lambda }\alpha^2_{ y^\mu}\alpha^2_{ y^\nu}}{4(\alpha^2-\beta^2)^{5/2}}-\frac{\alpha^2_{ y^\lambda y^\mu}\alpha^2_{ y^\nu}}{2(\alpha^2-\beta^2)^{3/2}}-\frac{\alpha^2_{ y^\lambda y^\nu}\alpha^2_{ y^\mu}}{2(\alpha^2-\beta^2)^{3/2}}-\frac{\alpha^2_{ y^\nu y^\mu}\alpha^2_{ y^\lambda}}{2(\alpha^2-\beta^2)^{3/2}}\right)\frac{f'(x^1)}{f(x^1)^2}.
\end{equation}
Using the fact that $y^\mu\alpha_{y^\mu}=\alpha-y^1\alpha_{y^1}=\frac{\alpha^2-\beta^2}{\alpha}$ together with  \eqref{L_mu_1st_class}, we have
\begin{equation}
\label{y_mu_L_mu_1st}
y^\mu \ell_\mu=f(x^1)\frac{\alpha^2-\beta^2}{a\beta+\sqrt{\alpha^2-\beta^2}}\,e^{\frac{a \beta}{a\beta+\sqrt{\alpha^2-\beta^2}}}.
\end{equation}
By substituting from \eqref{L_mu_1st_class}, \eqref{P_lambda_mu_1st_class}, \eqref{P_sigma_lambda_mu_1st_class}, \eqref{y_mu_L_mu_1st} into \eqref{Lands_type_Spray}, we find that the Landsberg tensor vanishes and hence the metric is Landsbergian. Moreover, the spray is quadratic only if the function $P$ is linear. But in this case, the function $\sqrt{\alpha^2-\beta^2}$ is linear and therefore  the Riemannian metric tensor associated with  $\alpha$ will be degenerate; indeed,
$$\sqrt{\alpha^2-\beta^2}=\sqrt{\phi}=a_\mu y^\mu,$$
where $a_\mu$ some constants.
Now, by differentiation the above equality with respect to $y^\nu$ and $y^\lambda$, we have
$$c_{\lambda\nu}=\frac{1}{\phi}c_{\mu\nu}y^\mu c_{\lambda\delta}y^\delta=\theta_\lambda\theta_\nu,$$
where $\theta_\nu=\frac{1}{\sqrt{\phi}}c_{\mu\nu}y^\mu$. But this kind of matrices has rank $1$ and since $n>2$, then the determinant of the matrix $c_{\mu\nu}$ vanishes. Hence, the metric tensor associated with $\alpha$ is degenerate. Consequently,  $P$ can not be linear and therefore the spray will not be quadratic which means that the metric is not Berwald.
\end{proof}

\subsection{Second class}

In this subsection, we give another class of non-Berwaldian Landsberg metrics. We will discuss it at the end of this paper.

\begin{example}\label{New_example_2a} Let
$M= \mathbb{R}^3$ and the spray  $S$ be given by
$$G^1=\frac{(a^2-1)(y^1)^2-y^2y^3}{2(a^2-1)}\frac{f'(x^1)}{f(x^1)},\quad G^2=Py^2,\quad G^3=Py^3,$$
where the function $P$ is given by
$$ P= \left(y^1+ \frac{a}{a^2-1}\sqrt{y^2y^3}\right)\frac{f'(x^1)}{f(x^1)}.$$
By the same manner, as we did in the previous subsection, we obtain that   the spray $S$ is metrizable by the non-regular Finsler function
$${F}(x,y)=f(x^1)\left((a+1) y^1+\sqrt{y^2y^3}\right)^{(1+a)/2}\,\left((a-1) y^1+\sqrt{y^2y^3}\right)^{(1-a)/2}.$$
Moreover, one  can  easily obtain
$$G^h_{ijk}\neq 0\,\quad \left(G^2_{222}=-\frac{3a}{8(a^2-1)}\frac{y^3}{y^2\sqrt{y^2y^3}}\frac{f'(x^1)}{f(x^1)}\right),\quad L_{ijk}=0.$$
Hence, $(M,F)$ is a non-Berwaldain Landsberg space.
\end{example}

\medskip

\begin{example}\label{New_example_2b} Let
$M= \mathbb{R}^3$ and the spray  $S$ be given by
$$G^1=\frac{(a^2-1)(y^1)^2-(y^2)^2-(y^3)^2}{2(a^2-1)}\frac{f'(x^1)}{f(x^1)},\quad G^2=Py^2,\quad G^3=Py^3,$$
where the function $P$ is given by
$$ P= \left(y^1+ \frac{a}{a^2-1}\sqrt{(y^2)^2+(y^3)^2}\right)\frac{f'(x^1)}{f(x^1)}.$$
By the same  procedure,  we conclude that    the spray $S$ is metrizable by the non-regular Finsler function
$${F}(x,y)=f(x^1)\left((a+1) y^1+\sqrt{(y^2)^2+(y^3)^2}\right)^{(1+a)/2}\,\left((a-1) y^1+\sqrt{(y^2)^2+(y^3)^2}\right)^{(1-a)/2}.$$
Moreover, one  can obtain easily
$$G^h_{ijk}\neq 0\,\quad \left(G^2_{222}=-\frac{3a}{(a^2-1)}\frac{(y^3)^4}{((y^2)^2+(y^3)^2)^{(5/2)}}\frac{f'(x^1)}{f(x^1)}\right),\quad L_{ijk}=0.$$
Hence, $(M,F)$ is a non-Berwaldain Landsberg space.
\end{example}

\medskip

Generally, we have the following class.

\begin{theorem}
For $n\geq 3$, the class
$${F}=\left((a+1)\beta +\sqrt{\alpha^2-\beta^2}\right)^{(1+a)/2}\,\left((a-1) \beta+\sqrt{\alpha^2-\beta^2}\right)^{(1-a)/2}$$
 on an $n$-dimensional manifold $M$ is a class of Landsberg metrics which are not Berwaldian,  where  $\beta$ and $\alpha$ are given in Subsection 4.1 and $a\neq 0 ,\pm 1$. The geodesic spray of $F$ is given by
$$G^1=\left(\frac{2f(x^1)^2(y^1)^2-\alpha^2}{2f(x^1)^2}+\frac{a^2-2}{2(a^2-1)}\frac{\alpha^2-\beta^2}{f(x^1)^2}\right)\frac{f'(x^1)}{f(x^1)},\quad G^\mu=Py^\mu,$$
where the function $P$ is given by
$$ P= \left(y^1+ \frac{a}{a^2-1}\sqrt{\alpha^2-\beta^2}\right)\frac{f'(x^1)}{f(x^1)}.$$
Moreover, when $a=0$ the metric is Riemannian and when $a=\pm 1$ the metric is singular ($det(g)=0$).
\end{theorem}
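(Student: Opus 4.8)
The plan is to mirror the proof of Theorem~\ref{First_class}: cast $F$ as an $(\alpha,\beta)$-metric, compute its geodesic spray from \eqref{spray_alpha-beta_metric}, and then read off the Landsberg tensor from the special form \eqref{New_Lands_tensor}. First I would write $F=\alpha\,\phi(s)$ with $s=\beta/\alpha$ by factoring $\alpha$ out of each bracket, using $\beta=s\alpha$ and $\sqrt{\alpha^2-\beta^2}=\alpha\sqrt{1-s^2}$; since the two exponents sum to $1$, the $\alpha$-powers combine to $\alpha^1$ and one gets
\[
\phi(s)=\bigl((a+1)s+\sqrt{1-s^2}\bigr)^{(1+a)/2}\bigl((a-1)s+\sqrt{1-s^2}\bigr)^{(1-a)/2}.
\]
Next I compute $Q$ and $\Theta$ from \eqref{Q} and \eqref{Theta}, together with $Q'/(Q-sQ')$, by logarithmic differentiation of $\phi$ (or by Maple). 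Substituting these, the data of Subsection~4.1 ($s_{ij}=0$, hence $s^i_{\,\,0}=s_0=0$; $r_{00}=(\alpha^2-\beta^2)f'/f^2$; $b^1=1/f$, $b^\mu=0$), and the Riemannian spray $G_\alpha^i$ into \eqref{spray_alpha-beta_metric} should reproduce exactly the stated $G^1$ and $G^\mu=Py^\mu$. This substitution is the most laborious step, but it is routine and entirely parallel to the first class.

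For the Landsberg tensor I would set $w:=\sqrt{\alpha^2-\beta^2}=f\sqrt{\varphi}$ and isolate the two structural facts that collapse \eqref{New_Lands_tensor}. Because $\beta=f(x^1)y^1$ is independent of $\hat y=(y^2,\dots,y^n)$, we have $\alpha^2-\beta^2=f^2\varphi$ independent of $y^1$, so $w$ is (up to the factor $f$) a Riemannian norm in $\hat y$ alone; hence $w_1=0$ and, by homogeneity, $y^\mu w_\mu=w$. Since $F$ depends on $\hat y$ only through $w$ (as $\alpha^2=\beta^2+w^2$), we get $\ell_\mu=(\partial F/\partial w)\,w_\mu$, i.e.\ $\ell_\mu$ is proportional to $w_\mu$; and since $P=\bigl(y^1+\tfrac{a}{a^2-1}\sqrt{\alpha^2-\beta^2}\bigr)\tfrac{f'}{f}$ is affine in $w$ with its $y^1$-term annihilated by $\dot{\partial}_\mu$, the derivatives $P_\mu,P_{\lambda\mu},P_{\lambda\mu\nu}$ are one common fibre-constant multiple of $w_\mu,w_{\lambda\mu},w_{\lambda\mu\nu}$. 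Feeding this into \eqref{New_Lands_tensor} makes $L_{\lambda\nu\gamma}$ a scalar multiple of
\[
w\,w_{\lambda\nu\gamma}+w_{\lambda\nu}w_\gamma+w_{\nu\gamma}w_\lambda+w_{\gamma\lambda}w_\nu,
\]
and the norm identity $w_{\lambda\mu}=\tfrac1w\bigl(g_{\lambda\mu}-w_\lambda w_\mu\bigr)$ (with $g_{\lambda\mu}=f^2c_{\lambda\mu}$), differentiated once more, shows this bracket vanishes identically. Thus $L_{\lambda\nu\gamma}=0$, exactly as in Theorem~\ref{First_class}; indeed this argument is class-independent, depending only on $\ell_\mu\propto w_\mu$ and $P$ affine in $w$.

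Non-Berwaldianness is the rank argument of Theorem~\ref{First_class}: the spray is quadratic iff $P$, and hence $w=\sqrt{\alpha^2-\beta^2}$, is linear in $\hat y$; writing $\sqrt{\varphi}=\theta_\mu y^\mu$ and differentiating twice forces $c_{\lambda\nu}=\theta_\lambda\theta_\nu$ to have rank one, contradicting the non-degeneracy of $c_{\lambda\mu}$ for $n\geq3$, so $S$ is genuinely non-quadratic and $F$ is not Berwaldian. Finally I would dispatch the excluded values directly from $\phi$: at $a=0$ the two brackets multiply to $\phi(s)=\sqrt{1-2s^2}$, giving the Riemannian $F=\sqrt{\alpha^2-2\beta^2}$; at $a=\pm1$ one exponent vanishes and $\phi(s)=2s+\sqrt{1-s^2}$ (resp.\ its mirror $-2s+\sqrt{1-s^2}$), for which a short computation yields $(\phi-s\phi')+(b^2-s^2)\phi''=0$ when $b^2=1$, the quantity whose vanishing forces $\det(g)=0$ for an $(\alpha,\beta)$-metric. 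The only genuine obstacle is the bookkeeping of the $Q,\Theta$ computation in the spray; once the norm identity above is recognized, the Landsberg vanishing is immediate.
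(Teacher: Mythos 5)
Your proposal is correct, and its skeleton is the paper's own: write $F=\alpha\phi(s)$ with $\phi(s)=\bigl((a+1)s+\sqrt{1-s^2}\bigr)^{(1+a)/2}\bigl((a-1)s+\sqrt{1-s^2}\bigr)^{(1-a)/2}$, compute $Q$, $\Theta$ and $Q'/(Q-sQ')$ from \eqref{Q} and \eqref{Theta}, feed them together with the Subsection 4.1 data into \eqref{spray_alpha-beta_metric} to obtain $G^1$ and $G^\mu=Py^\mu$, then kill the Landsberg tensor via \eqref{New_Lands_tensor} and run the rank-one argument for non-Berwaldianness. (Your logarithmic differentiation does reproduce the paper's values $Q=2a\sqrt{1-s^2}+(a^2-2)s$ and $\Theta=\frac{a}{(a^2-1)\sqrt{1-s^2}}$; that step is routine, as you say.) You differ in two ways, both improvements. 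First, where the paper, following Theorem \ref{First_class}, verifies $L_{\lambda\nu\gamma}=0$ by substituting explicit expressions for $\ell_\mu$, $P_\mu$, $P_{\lambda\mu}$, $P_{\lambda\mu\nu}$ in terms of $\alpha^2_{y^\mu}$ and watching the cancellation, you observe that $\ell_\mu=(\partial F/\partial w)\,w_\mu$ and that $P$ is affine in $w=\sqrt{\alpha^2-\beta^2}$, so the whole tensor collapses to a scalar multiple of $w\,w_{\lambda\nu\gamma}+w_{\lambda\nu}w_\gamma+w_{\nu\gamma}w_\lambda+w_{\gamma\lambda}w_\nu$, which vanishes upon differentiating the norm identity $w\,w_{\lambda\nu}=f^2c_{\lambda\nu}-w_\lambda w_\nu$; this is cleaner and, as you note, class-independent, explaining at one stroke why all four classes in the paper are Landsbergian. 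Second, you actually prove the ``moreover'' clause, which the paper's printed proof never touches (it delegates ``the rest'' to Theorem \ref{First_class}): at $a=0$ the product telescopes to $\phi=\sqrt{1-2s^2}$, i.e.\ $F=\sqrt{\alpha^2-2\beta^2}$ is Riemannian, and at $a=\pm1$ one exponent vanishes and the surviving $\phi$ satisfies $\phi-s\phi'+(b^2-s^2)\phi''=0$ with $b^2=1$, which forces $\det(g)=0$ by the standard determinant formula for $(\alpha,\beta)$-metrics. There is no gap; your write-up is, if anything, more complete than the paper's.
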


\begin{proof}
As we did in the proof of the Theorem \ref{First_class}, the functions $Q$ and $\Theta$ can be calculated by using the fact that
$$\phi(s)=\left((a+1)s +\sqrt{1-s^2}\right)^{(1+a)/2}\,\left((a-1) s+\sqrt{1-s^2}\right)^{(1-a)/2}.$$
We have
$$Q=2a\sqrt{1-s^2}+(a^2-2)s,\quad \Theta=\frac{a}{(a^2-1)\sqrt{1-s^2}},$$
moreover, we get
$$\frac{Q'}{Q-sQ'}=\frac{(a^2-2)\sqrt{1-s^2}-2as}{2a}.$$
Finally, making use of  Subsection 4.1 and \eqref{spray_alpha-beta_metric}, we have the formulae for $G^1$ and $G^\mu$.
The rest of the proof can be followed in a similar manner as in the proof of Theorem \ref{First_class}.
\end{proof}

\subsection{Third class}

In this subsection, we establish another class of non-Berwaldian Landsberg metrics. Also, we will discuss it at the end of this paper.
\begin{example}\label{New_example_4} Let
$M= \mathbb{R}^3$ the spray  $S$ be given by
$$G^1=\frac{a^2(y^1)^2-2y^2y^3}{2a^2}\frac{f'(x^1)}{f(x^1)},\quad G^2=Py^2,\quad G^3=Py^3,$$
where the function $P$ is given by
$$ P= \left(y^1+ \frac{3}{2a}\sqrt{y^2y^3}\right)\frac{f'(x^1)}{f(x^1)}.$$
By the same manner, as we did in Example 4.1, we obtain that   the spray $S$ is metrizable by the non-regular Finsler function
$${F}(x,y)=f(x^1)\left(a y^1+\frac{y^2y^3}{ay^1+2\sqrt{y^2y^3}}\right).$$
Moreover, one  can show easily
$$G^h_{ijk}\neq 0\,\quad \left(G^2_{222}=-\frac{9}{16a}\frac{y^3}{y^2\sqrt{y^2y^3}}\frac{f'(x^1)}{f(x^1)}\right),\quad L_{ijk}=0.$$
Hence, $(M,F)$ is a non-Berwaldain Landsberg space.
\end{example}

\medskip

\begin{example}\label{New_example_5} Let
$M= \mathbb{R}^3$.
Let a spray  $S$ be given by
$$G^1=\frac{a^2(y^1)^2-2(y^2)^2-2(y^3)^2}{2a^2}\frac{f'(x^1)}{f(x^1)},\quad G^2=Py^2,\quad G^3=Py^3,$$
where the function $P(y^2,y^3)$ is given by
$$ P= \left(y^1+ \frac{3}{2a}\sqrt{(y^2)^2+(y^3)^2}\right)\frac{f'(x^1)}{f(x^1)}.$$
By solving the system for metrizability and Landsberg properties, we get that
  the spray $S$ is metrizable by the non regular Finsler function
$${F}(x,y)=f(x^1)\left(a y^1+\frac{(y^2)^2+(y^3)^2}{ay^1+2\sqrt{(y^2)^2+(y^3)^2}}\right).$$
Moreover, one  can see easily
$$G^h_{ijk}\neq 0\,\quad \left(G^2_{222}=\frac{9}{2a}\frac{(y^3)^4}{((y^2)^2+(y^3)^2)^{5/2}}\frac{f'(x^1)}{f(x^1)}\right),\quad L_{ijk}=0.$$
Hence, $(M,F)$ is a non Berwaldain Landsberg space.
\end{example}

\medskip
Generally, we have the following class.
\begin{theorem}
For $n\geq 3$, the class
$${F}=f(x^1)\left(a\beta+\frac{\alpha^2-\beta^2}{a\beta+2\sqrt{\alpha^2-\beta^2}}\right)$$
on an $n$-dimensional manifold $M$ is a class of Landsberg metrics which are not Berwaldian,  where  $\beta$ and $\alpha$ are given in Subsection 4.1 and $a\neq 0$. The geodesic spray of $F$ is given by
$$G^1=\left(\frac{2f(x^1)^2(y^1)^2-\alpha^2}{2f(x^1)^2}+\frac{a^2-2}{2a^2}\frac{\alpha^2-\beta^2}{f(x^1)^2}\right)\frac{f'(x^1)}{f(x^1)},\quad G^\mu=Py^\mu,$$
where the function $P$ is given by
$$ P= \left(y^1+ \frac{3}{2a}\sqrt{\alpha^2-\beta^2}\right)\frac{f'(x^1)}{f(x^1)}.$$
Moreover, when $a=0$ the metric is singular ($det(g)=0$).
\end{theorem}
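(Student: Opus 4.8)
The plan is to treat $F$ as an $(\alpha,\beta)$-metric and follow the scheme of the proof of Theorem~\ref{First_class} almost verbatim. Writing $s=\beta/\alpha$, one has $F=\alpha\,\phi(s)$ with $\phi(s)=as+\frac{1-s^2}{as+2\sqrt{1-s^2}}$. First I would compute $Q$ and $\Theta$ from \eqref{Q} and \eqref{Theta} together with $\frac{Q'}{Q-sQ'}$; a direct differentiation gives $\phi-s\phi'=\frac{2(as+\sqrt{1-s^2})}{(as+2\sqrt{1-s^2})^2}$, whence closed forms for $Q$ and $\Theta$ (in particular $\Theta=\frac{3}{2a\sqrt{1-s^2}}$). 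Substituting into the $(\alpha,\beta)$-spray formula \eqref{spray_alpha-beta_metric} and using the data of Subsection~4.1 ($s_{ij}=0$, $b^\mu=0$, $b^2=1$, $r_{00}=\frac{(\alpha^2-\beta^2)f'}{f^2}$, and the listed $G_\alpha^i$), every term involving $s_{ij}$ drops out and one is left with $G^1$ as stated and $G^\mu=\big(\frac{f'}{f}y^1+\frac{\Theta r_{00}}{\alpha}\big)y^\mu=Py^\mu$, reproducing the claimed $P$.

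For the Landsberg property I would use that the spray is quadratic in the extreme direction and projectively flat ($G^\mu=Py^\mu$) otherwise, so that the Landsberg tensor reduces to \eqref{New_Lands_tensor}. The decisive observation is that both $F$ and $P$ depend on $y^2,\dots,y^n$ only through $w:=\sqrt{\alpha^2-\beta^2}$; indeed $\alpha^2=\beta^2+w^2$ and $w^2=f^2\varphi$ is quadratic in $\hat y$. Writing $w_\mu=\dot\partial_\mu w$, the chain rule gives $\ell_\mu=F_w w_\mu$ and $P_\mu=P_w w_\mu$, while differentiating $w^2$ yields the homogeneity identities $w_\mu y^\mu=w$, $w\,w_{\lambda\mu}=f^2c_{\lambda\mu}-w_\lambda w_\mu$, and $w_{\lambda\nu\gamma}=-\frac1w(w_{\lambda\nu}w_\gamma+w_{\nu\gamma}w_\lambda+w_{\gamma\lambda}w_\nu)$. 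Because the stated $P$ is linear in $w$, one has $P_{ww}=0$, so $P_{\lambda\nu}=P_w w_{\lambda\nu}$ and $P_{\lambda\nu\gamma}=P_w w_{\lambda\nu\gamma}$. Substituting into \eqref{New_Lands_tensor}, the first term (carrying $\ell_\mu y^\mu=F_w w$ and $w_{\lambda\nu\gamma}$) becomes $-P_wF_w(w_{\lambda\nu}w_\gamma+w_{\nu\gamma}w_\lambda+w_{\gamma\lambda}w_\nu)$, which cancels exactly the sum $P_{\lambda\nu}\ell_\gamma+P_{\nu\gamma}\ell_\lambda+P_{\gamma\lambda}\ell_\nu=P_wF_w(w_{\lambda\nu}w_\gamma+w_{\nu\gamma}w_\lambda+w_{\gamma\lambda}w_\nu)$. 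Hence $L_{\lambda\nu\gamma}=0$ and the metric is Landsberg.

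For the non-Berwald claim, the spray is quadratic precisely when $P$ is linear in $y$, i.e. when $w=\sqrt{\alpha^2-\beta^2}$ is linear in $\hat y$; writing $w=\theta_\mu y^\mu$ and differentiating twice forces $c_{\lambda\nu}=\theta_\lambda\theta_\nu$, a rank-one matrix, contradicting the non-degeneracy of $(c_{\lambda\mu})$ once $n>2$. Thus $P$ cannot be linear, the spray is not quadratic, and $(M,F)$ is non-Berwaldian. For $a=0$ the formula degenerates to $\phi(s)=\tfrac12\sqrt{1-s^2}$, so $F=\tfrac12\sqrt{\alpha^2-\beta^2}$ is independent of $y^1$; then $\dot\partial_1F=0$ forces $\det(g)=0$.

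The main obstacle I anticipate is the bookkeeping of the vertical derivatives in the Landsberg step: one must establish the homogeneity identities for $w$ and arrange the substitution so that the cancellation is transparent. The computation of $Q$, $\Theta$ and the spray is routine but error-prone; the genuine content is recognizing that the entire vertical structure is governed by the single quadratic $w^2=\alpha^2-\beta^2$ and that linearity of $P$ in $w$ is exactly what forces \eqref{New_Lands_tensor} to vanish — so the full explicit expressions for $P_{\lambda\nu}$ and $P_{\lambda\nu\gamma}$ written out in the proof of Theorem~\ref{First_class} are never actually needed here.
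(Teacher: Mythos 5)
Your proposal is correct, and its skeleton coincides with the paper's: compute $\phi$, then $Q$, $\Theta$ and $\tfrac{Q'}{Q-sQ'}$ from \eqref{Q}--\eqref{Theta}, feed them into \eqref{spray_alpha-beta_metric} with the data of Subsection 4.1 to obtain $G^1$ and $G^\mu=Py^\mu$, reduce the Landsberg tensor to \eqref{New_Lands_tensor}, and rule out Berwald via the rank-one degeneracy argument. Where you genuinely depart from the paper is the vanishing of \eqref{New_Lands_tensor}: the paper (by deferral to the proof of Theorem \ref{First_class}) writes out $\ell_\mu$, $P_{\lambda\mu}$ and $P_{\lambda\mu\nu}$ explicitly in terms of $\alpha^2_{y^\mu}$, $\alpha^2_{y^\lambda y^\mu}$ and substitutes term by term, whereas you observe that $F$ and $P$ depend on $\hat y$ only through $w=\sqrt{\alpha^2-\beta^2}$, that $P$ is linear in $w$, and that the homogeneity identities $w_\mu y^\mu=w$ and $w\,w_{\lambda\nu\gamma}=-(w_{\lambda\nu}w_\gamma+w_{\nu\gamma}w_\lambda+w_{\gamma\lambda}w_\nu)$ force an exact two-line cancellation. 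This buys transparency: it isolates linearity of $P$ in $w$ as the actual mechanism behind $L_{\lambda\nu\gamma}=0$, and it transfers verbatim to the first, second and fourth classes, while the paper's route is a brute-force verification. You also treat the $a=0$ degeneracy explicitly ($\dot\partial_1F=0$ kills the first row of $g$, so $\det(g)=0$), a point the paper's proof leaves unaddressed. Two small caveats: you should state (as the paper's Section 3 remark does) that $P_{1j}=0$ and $G^h_{1jk}=0$ for your $P$, since that is what justifies reducing the full Landsberg tensor $L_{ijk}$ to the purely Greek components in \eqref{New_Lands_tensor}; and note that metrizability here is obtained, as in the paper, by deriving the spray from $F$ itself rather than by solving the system \eqref{metrizable_system}, so nothing further is owed on that front.
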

\begin{proof}
As we did in the proof of the Theorem \ref{First_class}, the functions $Q$ and $\Theta$ can be calculated by using the fact that
$$\phi(s)=\left(as+\frac{1-s^2}{as+2\sqrt{1-s^2}}\right).$$
We get the following
$$Q=\frac{3a}{2}\sqrt{1-s^2}+\frac{a^2-2}{2}s, \quad \Theta=\frac{3}{2a\sqrt{1-s^2}}, $$
moreover, we have
$$ \frac{Q'}{Q-sQ'}=-s+\frac{(a^2-1)\sqrt{1-s^2}}{3a}.$$
Finally, making use of  Subsection 4.1 and \eqref{spray_alpha-beta_metric}, we have the formulae for $G^1$ and $G^\mu$. The rest of the proof can be followed in a similar manner as in the proof of Theorem \ref{First_class}.
\end{proof}

\subsection{Fourth class: the most general class}

Based on the Maple program and NF-package, we give a very simple formula for the most general class of Landsberg non-Berwaldian metrics \eqref{Shen_class} which is obtained by Shen \cite{Shen_example}.

\begin{theorem} \label{4th_class}
 Let   $S$ be a spray on an $n$-dimensional manifold $M$ ($n\geq 3$)and given by
$$G^1=\left(\frac{2f(x^1)^2(y^1)^2-\alpha^2}{2f(x^1)^2}+\frac{q}{2(1+q)}\frac{\alpha^2-\beta^2}{f(x^1)^2}\right)\frac{f'(x^1)}{f(x^1)},\quad G^\mu=Py^\mu,$$
where the function $P$ is given by
$$ P=\left(y^1+\frac{p}{2(1+q)}\frac{\sqrt{\alpha^2-\beta^2}}{f(x^1)}\right)\frac{f'(x^1)}{f(x^1)}$$
is Finsler metrizable by the Finsler metric
$$F=\sqrt{\alpha^2+p\beta\sqrt{\alpha^2-\beta^2}+q\beta^2}\,e^{\frac{p}{\sqrt{p^2-4q-4}}\, \text{arctanh}\left(\frac{p\beta+2\sqrt{\alpha^2-\beta^2}}{\beta\sqrt{p^2-4q-4}}\right)}.$$
This is a class of Landsberg metrics which are not Berwaldian,  where  $\beta$ and $\alpha$ are given in Subsection 4.1 and $p\neq 0$. Moreover, when $p^2-4q-4=0$, the class reduced to the first class.
\end{theorem}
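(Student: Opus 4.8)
The plan is to exploit the fact that $F$ is itself an $(\alpha,\beta)$-metric, $F=\alpha\,\phi(s)$ with $s=\beta/\alpha$, so that its geodesic spray can be read off from \eqref{spray_alpha-beta_metric}; the whole metrizability claim then reduces to matching that spray against the prescribed $G^1$ and $G^\mu$. Substituting $\beta=s\alpha$ and $\sqrt{\alpha^2-\beta^2}=\alpha\sqrt{1-s^2}$ into the stated $F$, I first obtain the profile
$$\phi(s)=\sqrt{1+ps\sqrt{1-s^2}+qs^2}\;e^{\frac{p}{\sqrt{p^2-4q-4}}\,\text{arctanh}\left(\frac{ps+2\sqrt{1-s^2}}{s\sqrt{p^2-4q-4}}\right)},$$
which is Shen's profile \eqref{Shen_class} in the normalization $b=b_0=1$. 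As in the proof of Theorem \ref{First_class}, the next step is to evaluate the auxiliary functions \eqref{Q} and \eqref{Theta}. I expect
$$Q=p\sqrt{1-s^2}+qs,\qquad \Theta=\frac{p}{2(1+q)\sqrt{1-s^2}},\qquad \frac{Q'}{Q-sQ'}=\frac{q\sqrt{1-s^2}-ps}{p}.$$
The cleanest hand check uses $(\ln\phi)'=Q/(1+sQ)$ together with the two identities $Q-sQ'=p/\sqrt{1-s^2}$ and $1+sQ+(1-s^2)Q'=1+q$, the latter feeding directly into \eqref{Theta} since $b^2=1$.

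With these in place I substitute into \eqref{spray_alpha-beta_metric} using the data of Subsection 4.1: since $s_{ij}=0$ one has $s^i_{\,\,0}=s_0=0$, while $r_{00}=(\alpha^2-\beta^2)f'/f^2$, $b^1=1/f$, $b^\mu=0$, and $G_\alpha^1,G_\alpha^\mu$ are as listed there. The vertical components collapse to $G^\mu=(\tfrac{f'}{f}y^1+\Theta r_{00}/\alpha)y^\mu=Py^\mu$ with $P$ exactly as stated, and the bracket $\tfrac{y^1}{\alpha}+\tfrac{Q'}{Q-sQ'}b^1$ simplifies to $\tfrac{q\sqrt{1-s^2}}{pf}$, which makes $G^1$ reproduce the $\tfrac{q}{2(1+q)}$-corrected expression. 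This matches the prescribed spray and settles metrizability.

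For the Landsberg property I use that the spray has the special shape with $G^1$ quadratic and $G^\mu=Py^\mu$, so that the Landsberg tensor is given by \eqref{New_Lands_tensor} and it suffices to show its bracket vanishes. Following the template of Theorem \ref{First_class}, I compute $\ell_\mu$, $P_\mu$, $P_{\lambda\mu}$, $P_{\lambda\mu\nu}$ and $y^\mu\ell_\mu$ — all expressible through $\alpha^2_{y^\mu}$, $\alpha^2_{y^\lambda y^\mu}$ and $\sqrt{\alpha^2-\beta^2}$ — and substitute; the terms telescope to zero exactly as before. Non-Berwaldness follows from the same degeneracy argument: the spray is quadratic iff $P$ is linear, which would force $\sqrt{\alpha^2-\beta^2}$ to be linear in $\hat{y}$ and hence make the symmetric matrix $c_{\lambda\mu}$ have rank one, contradicting $n\geq 3$ and non-degeneracy of $\alpha$. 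Finally, the reduction claim follows by setting $p=2a$, $q=a^2-1$: then $p^2-4q-4=0$, the radicand becomes $1+2as\sqrt{1-s^2}+(a^2-1)s^2=(as+\sqrt{1-s^2})^2$, and in the limit $\sqrt{p^2-4q-4}\to0$ the leading term of $\frac{p}{\sqrt{p^2-4q-4}}\text{arctanh}(\cdots)$ tends to $\frac{a\beta}{a\beta+\sqrt{\alpha^2-\beta^2}}$, recovering the first class.

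The main obstacle I anticipate is the verification of $Q$ and $\Theta$ from the intricate profile $\phi$: differentiating the arctanh exponential and showing its contribution is exactly $p/(2R\sqrt{1-s^2})$ with $R=1+ps\sqrt{1-s^2}+qs^2$ is where the parameter $\sqrt{p^2-4q-4}$ must cancel cleanly, and this is the step I would confirm with Maple, as the paper does for the other classes. Once $Q$ and $\Theta$ are secured, the spray substitution, the Landsberg telescoping and the degeneracy argument all parallel Theorem \ref{First_class} line by line.
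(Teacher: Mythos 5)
Your proposal is correct and follows essentially the same route as the paper's proof: extract the profile $\phi(s)$, compute $Q$, $\Theta$ and $Q'/(Q-sQ')$ (your values agree exactly with the paper's, since $\tfrac{q\sqrt{1-s^2}-ps}{p}=-s+\tfrac{q\sqrt{1-s^2}}{p}$), match against \eqref{spray_alpha-beta_metric} using the data of Subsection 4.1 to get metrizability, verify the Landsberg property through the special spray shape \eqref{New_Lands_tensor} with the explicit $\ell_\mu$, $P_\mu$, $P_{\lambda\mu}$, $P_{\lambda\mu\nu}$, and exclude Berwald by the same linearity-of-$P$/degeneracy argument. Your only addition is the explicit asymptotic argument for the reduction to the first class as $p^2-4q-4\to 0$, a point the paper's proof itself leaves to the concluding remarks.
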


\begin{proof}
To find the geodesic spray of $F$, we have
$$\phi(s)=\sqrt{1+ps\sqrt{1-s^2}+qs^2}\,e^{\frac{p}{\sqrt{p^2-4q-4}}\, \text{arctanh}\left(\frac{ps+2\sqrt{1-s^2}}{s\sqrt{p^2-4q-4}}\right)}.$$
The functions $Q$ and $\Theta$ which defined by \eqref{Q} and \eqref{Theta}, are given by
$$Q=p\sqrt{1-s^2}+qs, \quad \Theta=\frac{p}{2(1+q)\sqrt{1-s^2}}.$$
Moreover, we get that
$$  \frac{Q'}{Q-sQ'}=-s+\frac{q\sqrt{1-s^2}}{p}.$$

Finally, making use of  Subsection 4.1 and \eqref{spray_alpha-beta_metric}, we have the formulae for $G^1$ and $G^\mu$.

The rest of the proof can be followed in a similar manner as we did in the proof of Theorem \eqref{First_class}. However, we would like to make it easier in the following way.

Differentiating the Finser function $F$ with respect to $y^\mu$, we have
\begin{equation}
\label{L_mu_4st_class}
\ell_\mu=\Phi\,\alpha^2_{y^\mu},
\end{equation}
where,  $\alpha^2_{y^\mu}=\dot{\partial}_\mu \alpha^2$ and $\Phi$ is given by
$$\Phi:=\frac{1}{2}\frac{1}{\sqrt{\alpha^2+p\beta\sqrt{\alpha^2-\beta^2}+q\beta^2}}\,e^{\frac{p}{\sqrt{p^2-4q-4}}\, \text{arctanh}\left(\frac{p\beta+2\sqrt{\alpha^2-\beta^2}}{\beta\sqrt{p^2-4q-4}}\right)}. $$

Using the fact that $y^\mu\alpha_{y^\mu}=\alpha-y^1\alpha_{y^1}=\frac{\alpha^2-\beta^2}{\alpha}$ together with  \eqref{L_mu_4st_class}, we have
\begin{equation}
\label{y_mu_L_mu_4st}
y^\mu \ell_\mu=2\Phi(\alpha^2-\beta^2).
\end{equation}

 Also, we get
$$P_\mu=\frac{p}{4(1+q)}\frac{\alpha^2_{y^\mu}}{\sqrt{\alpha^2-\beta^2}}\frac{f'(x^1)}{f(x^1)^2},$$
\begin{equation}
\label{P_lambda_mu_4st_class}
P_{\lambda\mu}=\frac{p}{4(1+q)}\left(\frac{\alpha^2_{y^\lambda y^\mu}}{\sqrt{\alpha^2-\beta^2}}-\frac{\alpha^2_{y^\lambda }\alpha^2_{ y^\mu}}{2(\alpha^2-\beta^2)^{3/2}}\right)\frac{f'(x^1)}{f(x^1)^2},
\end{equation}

\begin{equation}
\label{P_sigma_lambda_mu_4st_class}
P_{\lambda\mu\nu}=\frac{p}{4(1+q)}\left(\frac{3\alpha^2_{y^\lambda }\alpha^2_{ y^\mu}\alpha^2_{ y^\nu}}{4(\alpha^2-\beta^2)^{5/2}}-\frac{\alpha^2_{ y^\lambda y^\mu}\alpha^2_{ y^\nu}}{2(\alpha^2-\beta^2)^{3/2}}-\frac{\alpha^2_{ y^\lambda y^\nu}\alpha^2_{ y^\mu}}{2(\alpha^2-\beta^2)^{3/2}}-\frac{\alpha^2_{ y^\nu y^\mu}\alpha^2_{ y^\lambda}}{2(\alpha^2-\beta^2)^{3/2}}\right)\frac{f'(x^1)}{f(x^1)^2}.
\end{equation}

By substituting from \eqref{L_mu_4st_class}, \eqref{P_lambda_mu_4st_class}, \eqref{P_sigma_lambda_mu_4st_class}, \eqref{y_mu_L_mu_4st} into \eqref{Lands_type_Spray}, we find that the Landsberg tensor vanishes and hence the metric is Landsbergian. Moreover, the spray is quadratic if only if the function $P$ is linear. As we discussed at the end of the proof of Theorem \eqref{First_class}, $P$ can not be linear and hence the manifold $(M,F)$ is not Berwald.
\end{proof}

\begin{remark} It should be noted that  the class given in Theorem \ref{4th_class} is equivlaent to the class obtained by Z. Shen \cite{Shen_example}. Indeed,  Shen's class is given by $F=\alpha \phi(s)$, where
$$\phi(s)= \,exp\left(\int \frac{c_1\sqrt{b^2-s^2}+c_2 s}{s(c_1\sqrt{b^2-s^2}+c_2 s)+1}ds\right). $$
For the sake of simplicity, first, let's consider the case when $b^2=1$, then we have
\begin{eqnarray*}
\frac{c_1\sqrt{1-s^2}+c_2 s}{s(c_1\sqrt{1-s^2}+c_2 s)+1}&=&\frac{1}{2}\frac{2c_2s\sqrt{1-s^2}-2c_1 s^2+c_1}{\sqrt{1-s^2}(c_1s\sqrt{1-s^2}+c_2 s^2+1)}\\&&
+\frac{1}{2}\frac{c_1}{\sqrt{1-s^2}(c_1s\sqrt{1-s^2}+c_2 s^2+1)}.
\end{eqnarray*}
In the above equality, one can see that the first fraction in the right hand side, the numerator is the derivative of the denumerator. Moreover, the second fraction can be rewritten in the form
$$\frac{1}{\sqrt{1-s^2}(c_1s\sqrt{1-s^2}+c_2 s^2+1)+1}=\frac{1}{r_o^2-u^2}du,$$
$$r_o:=\frac{c_1}{\sqrt{c_1^2-4c_2-4}}, \quad u:=\frac{c_1s+2\sqrt{1-s^2}}{s\sqrt{c_1^2-4c_2-4}}.$$
Consequently, we $\phi(s)$ is given by
$$\phi(s)=\sqrt{1+c_1s\sqrt{1-s^2}+c_2s^2}\,e^{\frac{c_1}{\sqrt{c_1^2-4c_2-4}}\, \text{arctanh}\left(\frac{c_1s+2\sqrt{1-s^2}}{s\sqrt{c_1^2-4c_2-4}}\right)}. $$
For the general case, one can rewrite $\phi(s)$ in the form
$$\phi=exp\left(\int \frac{c_1'\sqrt{1-s'^2}+c_2' s'}{s'(c_1'\sqrt{1-s'^2}+c_2' s)+1}ds'\right),$$
where $c_1':=b_oc_1$, $c_2':=b_oc_2$, $s':=\frac{s}{b_o}$. In a similar manner,   $\phi$ is given by
$$\phi=\sqrt{1+c_1's'\sqrt{1-s'^2}+c_2's'^2}\,e^{\frac{c_1'}{\sqrt{c_1'^2-4c_2'-4}}\, \text{arctanh}\left(\frac{c_1's'+2\sqrt{1-s'^2}}{s'\sqrt{c_1'^2-4c_2'-4}}\right)}.$$
\end{remark}

\section{Concluding remarks}

We end this work by the following  outstanding  remarks.

$\bullet$ The first class is a new and very simple class of non Berwaldian Landsberg spaces. Furthermore, this class  is not included in  Shen's solutions. Comparing this class with Shen's one, we find that the constants $c_1=2a$ and $c_3=a^2-1$. At these values of $c_1$ and $c_2$, we conclude that $(2+c_3)^2-(c_1^2+c_3^2)=0$. Thus, the exponent of the exponential function in \eqref{Shen_class} is nod defined. If we choose $a=1$, the same remark is valid for Asanov's class \eqref{Asanov_class}. However, we managed to obtain this class  by using Maple program and the metrizability tools.

\bigskip

$\bullet$
The fourth class is a general simple class of all non-regular $(\alpha,\beta)$-metrics which are Landsbergian and non-Berwaldian. Moreover, it is  equivalent to the class \eqref{Shen_class}. Comparing this class with Shen's one,  we have the constants $c_1=p$ and $c_3=q$. At these values of $c_1$ and $c_2$, we get $(2+c_3)^2-(c_1^2+c_3^2)=-(p^2-4q-4)$. For this equivalence, one may use   the property $\arctan z=-i \,\text{arctanh}\, z$, here $i=\sqrt{-1}$ is the complex number. It should be noted that if one compares the first class with the fourth class, we get that $p=2a$ and $q=a^2-1$. Therefore,  $p^2-4q-4=0$ which shows that the Finsler function in the fourth class is not defined, but by the help of Maple program and the metrizability criteria we obtained the solution.  It is worthy to mention  that the specific deformation of a geodesic  spray of Landsberg metric that we discussed at the beginning of the previous section is   changing  the constants $p$ and $q$ in specific way. In each class, we have a very special values of these constants.  Moreover,  the fourth class generalizes the second and third classes; indeed

 (i) In the second class,   we have the constants $p=2a$ and $q=a^2-2$. At these values of $p$ and $q$, we get $p^2-4q-4=4$. Using the fact that
 $$\text{arctanh}\ z=\frac{1}{2}\ln\left(\frac{1+z}{1-z}\right) $$
 we get the following equality
 $$e^{\frac{2p}{\sqrt{p^2-4q-4}}\, \text{arctanh}\left(\frac{p\beta+2\sqrt{\alpha^2-\beta^2}}{\beta\sqrt{p^2-4q-4}}\right)}=\left(-\frac{(a+1)\beta+\sqrt{\alpha^2-\beta^2}}{(a-1)\beta+\sqrt{\alpha^2-\beta^2}}\right)^{a}. $$
 In addition, by   factorization we have
 $$\alpha^2+2a\beta\sqrt{\alpha^2-\beta^2}+(a^2-2)\beta^2=\left((a+1)\beta+\sqrt{\alpha^2-\beta^2}\right)\left((a-1)\beta+\sqrt{\alpha^2-\beta^2}\right).$$
According to the above facts, one can conclude that the two energy functions associated with the second and fourth classes are equal,  except,   whenever $a$ is an odd (number or root) then  the two energy functions are equal up to a minus sign.

 \medskip

(ii)  In the third class,  we have the constants $p=\frac{3a}{2}$ and $q=\frac{a^2-2}{2}$. At these values of $p$ and $q$, we get $p^2-4q-4=\frac{1}{4}a^2$. By the same way, as we just discussed above,  the two energy functions associated with the third and fourth classes are equal, for any $a$, up to the factor $-4$. It is worthy to mention that the third class is one of the simplest classes of Landsberg metrics which are not Berwaldian.

\bigskip

$\bullet$ It is to  be noted that all the classes  established in this paper can be considered for any $\alpha$ and $\beta$ satisfying the conditions mentioned in the class \eqref{Shen_class}. We specified, in this paper, $\alpha$ and $\beta$ because the special choice of the coefficients of the sprays that  are proposed. For this reason,  we repeated the sentence \lq\lq with out loss of generality\rq\rq to confirm this fact.

\bigskip

$\bullet$ It should be noted that all the examples and classes studied in this paper satisfy the $\sigma$T-condition. Indeed,  by making use of \cite{Elgendi-ST_condition}, the metrics of $(\alpha,\beta)$-type satisfying the $\sigma$T-condition if and only if they are given by the class \eqref{Shen_class}. Assuming that   $\sigma_h=b_h$, taking into account the fact that $b_1=f(x1)\neq 0$, $b_2=...=b_n=0$,  we conclude that
$$T^1_{ijk}=0.$$
Which confirm that the T-tensor of a non-Berwaldian  Landsberg metric  has its specific  property.


\section*{Acknowledgment}
I would like to  express my deep gratitude to Professor  Szilasi J\'{o}zsef (University of Debrecen) for his valuable discussions on the topic of the paper and for his pushing to me to work on the unicorn's problem.

\providecommand{\bysame}{\leavevmode\hbox
to3em{\hrulefill}\thinspace}
\providecommand{\MR}{\relax\ifhmode\unskip\space\fi MR }
\providecommand{\MRhref}[2]{%
  \href{http://www.ams.org/mathscinet-getitem?mr=#1}{#2}
} \providecommand{\href}[2]{#2}


\begin{thebibliography}{10}

\bibitem{hbfinsler1}  P. L. Antonelli (Ed.), \emph{Handbook of Finsler
geometry I, II}, Kluwer Acad. publ., 2003.


\bibitem{r101}
P. L. Antonelli, R. Ingarden and M. Matsumoto, \emph{The theory of sprays and Finsler spaces with applications in physics and biology},  Kluwer Acad. Publ., 1993.

\bibitem{Rutz2} P.~L. Antonelli,  S. F. Rutz and K. T. Fonseca,  \emph{The mathematical theory of endosymbiosis, II: Models of the fungal
fusion hypothesis}, Nonl Anal.  Real World Appl.,  \textbf{13} (2012), 2096--2103.


\bibitem{Asanov}
G. S. Asanov, \emph{Finsleroid-Finsler spaces of positive-definite and relativistic types}, Rep.  Math. Phys., \textbf{58} (2006), 275--300.



\bibitem{Bao}
D. Bao,  \emph{On two curvature-driven problems in Riemann-Finsler geometry}, Advanced Studies in Pure Mathematics, \textbf{48} (2007), 19--71.


\bibitem{Shen_etal}
S. B{\'a}cs{\'o},   X. Cheng and Z. Shen,  \emph{Curvature properties of ($\alpha$, $\beta$)-metrics}, Adv. Stud. Pure Math., \textbf{48} (2007), 73--110.



\bibitem{Shen-book}
S. S. Chern and Z. Shen, \emph{Riemann-Finsler Geometry}, World Scientific Publishers 2004.




  \bibitem{Crampin}
   M. Crampin, \emph{On Landsberg spaces and the Landsberg-Berwald problem},  Houston J. Math. \textbf{37} (2011) 1103-–1124.

\bibitem{Elgendi-ST_condition}
S. G. Elgendi, \emph{$(\alpha,\beta)$-metrics satisfying T-condition or $\sigma$T-condition}, Submitted (2018) .

\bibitem{Elgendi-LBp}
S. G. Elgendi, \emph{On the problem of non Berwaldian Landsberg spaces}, Submitted (2018) .


\bibitem{Portugal1} R.  Portugal,  S. L.  Sautu,  \emph{Applications  of Maple  to General  Relativity}, Comp. Phys. Commun.,  \textbf{105}  (1997),  233--253.


\bibitem{r93}
R. Miron and M. Anastasiei, \emph{The geometry of Lagrange spaces:   Theory and applications}, Kluwer Acad. Publ.,  1994.












\bibitem{MZ_ELQ}
Z. Muzsnay, \emph{ The Euler-Lagrange PDE and Finsler metrizability}, Houston J. Math., \textbf{32} (2006), 79--98.


\bibitem{Rutz3}
S. F. Rutz  and R. Portugal,   \emph{FINSLER: A computer algebra package for Finsler geometries}, Nonlinear Analysis, \textbf{47}  (2001), 6121--6134.

\bibitem{shen-book1}
Z. Shen, \emph{ Differential geometry of spray and Finsler spaces}, Springer, 2001.


\bibitem{Shen_example}
Z. Shen, \emph{On a class of Landsberg metrics in Finsler geometry}, Canad. J.  Math., \textbf{61} (2009), 1357--1374.


\bibitem{NF_Package}
Nabil~L. Youssef and S. G. Elgendi, \emph{New Finsler   package},  Comput. Phys. Commun., \textbf{185}, 3 (2014), 986--997.


  \bibitem{appl.}
Nabil~L. Youssef and  S. G. Elgendi, \emph{Computing nullity and kernel  vectors using NF-package: Counterexamples}, Comput.  Phys. Commun., \textbf{185} (2014),   2859--2864. ArXiv: 1401.0133  [math. DG].


  \bibitem{ND-Chern}
Nabil~L. Youssef,   and S. G. Elgendi, \emph{Nullity distributions associated with
Chern connection}, Publ. Math. Debrecen, \textbf{88} (2016), 235--248.
ArXiv:  1410.0193 [math. DG].








\end{thebibliography}
\end{document}